\newtheorem{proposition}{Proposition}[section]
\theoremstyle{remark}
\newtheorem{remark}{Remark}[section]
\newtheorem{algorithm}{Algorithm}[section]
\DeclareMathOperator{\diag}{diag}
\DeclareMathOperator{\Div}{div}
\DeclareMathOperator{\Id}{Id}
\author[P. Greengard]{Philip Greengard}
\address{Department of Statistics, Columbia University}
\author[J.G. Hoskins]{Jeremy G. Hoskins}
\address{Department of Statistics, University of Chicago}
\author[N.F. Marshall]{Nicholas F. Marshall}
\address{Program in Applied and Computational Mathematics and the Department of
Mathematics, Princeton University}
\author[A. Singer]{Amit Singer}
\address{Program in Applied and Computational Mathematics and the Department of
Mathematics, Princeton University}
\title[On a linearization of quadratic Wasserstein distance]{On a linearization
of quadratic Wasserstein distance}
\keywords{Wasserstein distance, Sobolev norm, Witten Laplacian}
\thanks{P.G. is supported by the Alfred P. Sloan Foundation.  N.F.M. is supported in
part by NSF DMS-1903015.
A.S. is supported in part by AFOSR FA9550-20-1-0266, the Simons Foundation Math+X
Investigator Award,  NSF BIGDATA Award IIS-1837992, NSF DMS-2009753, and NIH/NIGMS
1R01GM136780-01. }
\begin{document}

\begin{abstract}
This paper studies the problem of computing a linear approximation of
quadratic Wasserstein distance $W_2$. In particular, we compute
an approximation of the negative homogeneous weighted Sobolev norm
whose connection to Wasserstein distance follows from a classic
linearization of a general Monge-Amp\'ere equation. Our contribution is
threefold. First, we provide expository material on this classic linearization of Wasserstein distance including a quantitative error estimate. 
Second, we reduce the computational problem to solving a elliptic 
boundary value problem involving the Witten Laplacian, which is a 
Schr\"odinger operator of the form $H = -\Delta + V$, and describe an
associated embedding. Third, for the case of probability distributions on the
unit square $[0,1]^2$ represented by $n \times n$ arrays we present a fast code
demonstrating our approach. Several numerical examples are presented.
\end{abstract}

\maketitle

\section{Introduction}
\subsection{Introduction} \label{intro}
Let $\mu$ and $\nu$ be probability measures supported on a bounded convex
set $\Omega \subset \mathbb{R}^N$.
The quadratic Wasserstein distance $W_2(\mu,\nu)$ is defined by
\begin{equation} \label{defw2}
W_2(\mu,\nu)^2 := \inf_\pi \int_{\Omega \times \Omega} |x - y|^2 d\pi(x,y),
\end{equation}
where the infimum is taken over all transference plans $\pi$ (probability
measures on $\Omega \times \Omega$ such that $\pi[A \times \Omega] = \mu[A]$
and $\pi[\Omega \times A] = \nu[A]$ for all
measurable sets $A$). Computing the quadratic Wasserstein distance
is a nonlinear problem. In this paper, we consider the case where $\mu$ and $\nu$ 
have smooth positive densities $f$ and $g$ with respect to Lebesgue
measure: $d\mu= f dx$ and $d\nu = g\, dx$. In this case,
there is a classic local linearization of $W_2$
based on a weighted negative  homogeneous
Sobolev  norm, which is derived from  linearizing a general Monge--Amp\'ere
equation, see \S \ref{prelim}. In particular, the weighted negative homogeneous
Sobolev norm $\|\cdot\|_{\dot{H}^{-1}(d\mu)}$ is defined for functions $u$ such that $\int_\Omega u d\mu = 0$ by
\begin{equation} \label{Hdefn}
\|u\|_{\dot{H}^{-1}(d\mu)}  := \sup \left\{ \int_\Omega u\, \varphi\, d\mu :
\|\varphi\|_{\dot{H}^1(d\mu)} = 1 \right\},
\end{equation}
where
\begin{equation} \label{eqh1d}
\|\varphi\|_{\dot{H}^1(d\mu)}^2 := \int_\Omega |\nabla \varphi |^2 d\mu.
\end{equation}
That is, the space $\dot{H}^{-1}(d\mu)$ is the dual space of $\dot{H}^1(d\mu)$.
Under fairly general conditions, if $\delta \mu$ denotes a perturbation of
$\mu$, then (informally speaking) we have
$$
W_2(\mu,\mu + \delta\mu) = \|\delta\mu\|_{\dot{H}^{-1}(d\mu)} + o(\|\delta \mu\|),
$$
see for example [Theorem 7.2.6 \cite{Villani2003}] for a precise
statement. Under stronger assumptions, if $\|\delta \mu\| = \varepsilon$, then 
the error term can be shown to be $\mathcal{O}(\varepsilon^2)$, see \S
\ref{preciselin} for details.
In this
paper, we study how to leverage the connection between
the
$\dot{H}^{-1}(d\mu)$-norm and the $W_2$
metric for computational  purposes. In particular, 
our computational approach is based on a connection between  the
$\dot{H}^{-1}(d\mu)$-norm and the Witten Laplacian, which is a Schr\"odinger
operator of the form
$$
H = -\Delta + V,
$$
where $V$ is a potential that depends on $f$, see Figure \ref{fig01}. We show
that this connection provides a method of computation whose
computational cost can be controlled by the amount of regularization used when
defining the potential, see \S \ref{compsec} for details.

\begin{figure}[ht!]
\centering
\begin{tabular}{cc}
\includegraphics[width =.45\textwidth]{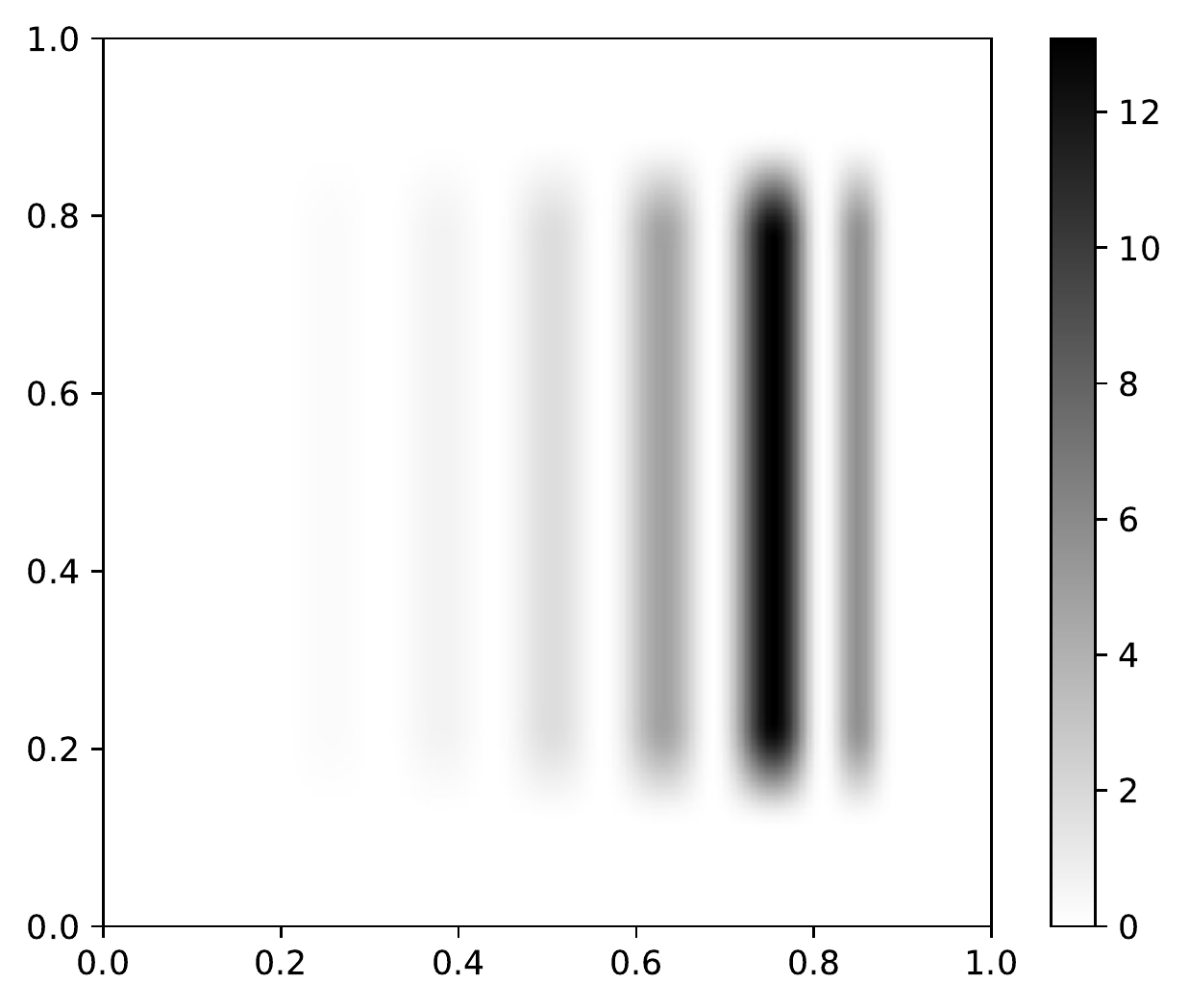}  &
\includegraphics[width =.45\textwidth]{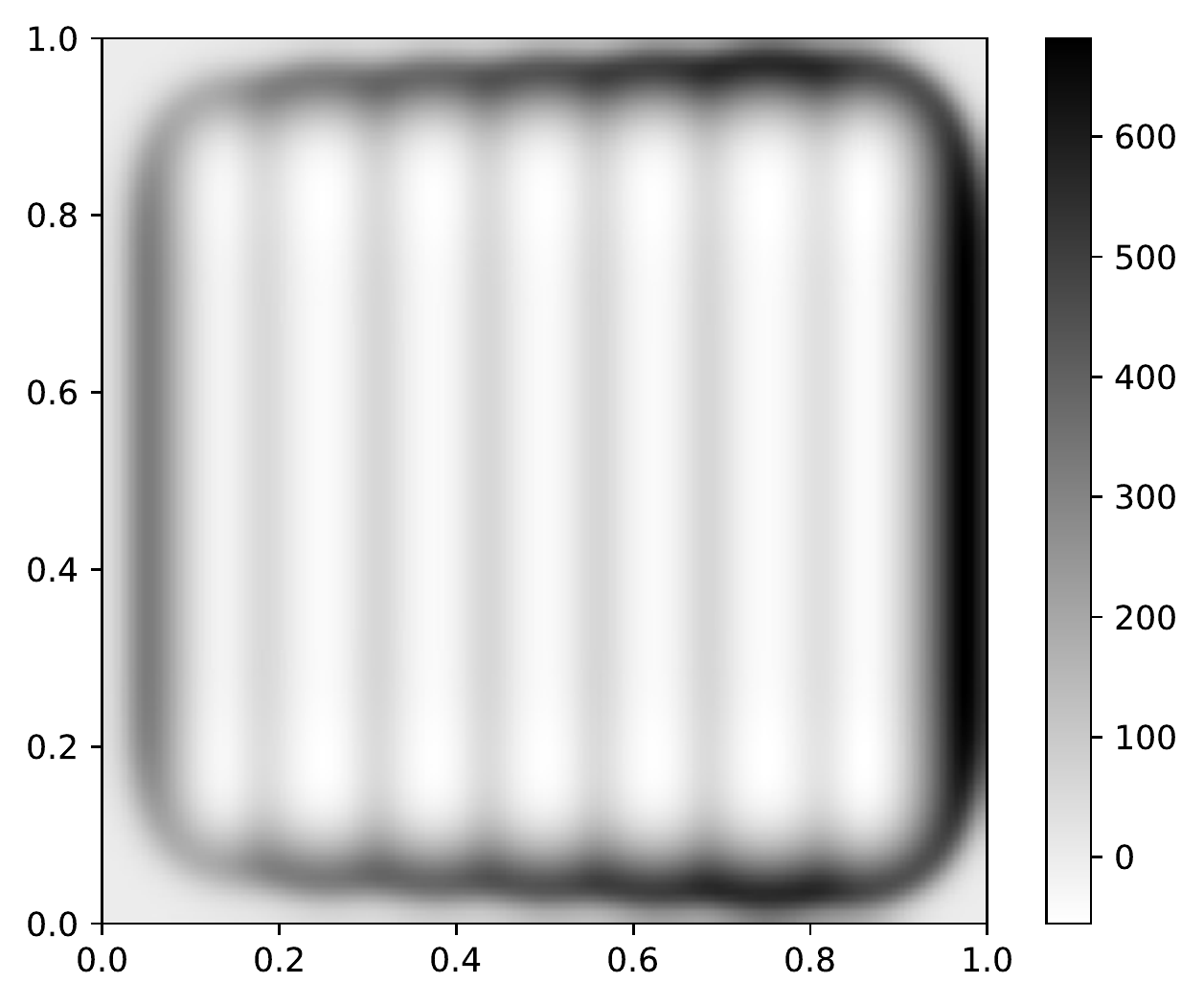}  
\end{tabular}
\caption{Function $f$ (left) and its regularized potential $V$
(right), see \S \ref{translateex} for details about this example.}
\label{fig01}
\end{figure}

For the case of probability distributions on the unit square $[0,1]^2$
represented by $n \times n$ arrays, we present a code for computing this
linearization of $W_2$ based on the Witten Laplacian, and present a number of
numerical examples, see \S \ref{numericssec}.  This Witten Laplacian
perspective leads to several potential applications; in particular, a method of
defining an embedding discussed and methods of smoothing
discussed in \S \ref{discuss}. 

\subsection{Background} \label{relatedwork}
The quadratic Wasserstein distance $W_2$ is an instance
of Monge --Kantorovich optimal transport whose study was initiated by 
Monge \cite{Monge1781} in 1781 and generalized by Kantorovich
\cite{Kantorovich1942} in 1942. The theory of optimal transport has
been developed by many authors; for a summary see
the book by Villani  \cite{Villani2003}. Recently, due to new applications in
data science and machine learning, developing methods to compute and approximate optimal 
transport distances has become an important area of research in applied
mathematics, see the surveys by Peyr\'e and  Cuturi \cite{PeyreCuturi2019} and
Santambrogio \cite{Santambrogio2015}.

In this paper, we focus on the connection between the $W_2$ metric and the
$\dot{H}^{-1}(d\mu)$-norm, which can be used to  approximate $W_2$, see \S
\ref{linear} and \S \ref{preciselin}. The connection between
$\dot{H}^{-1}(d\mu)$ and $W_2$ follows from the
work of Brenier \cite{Brenier1987} in 1987 who discovered that under
appropriate conditions the solution to Monge--Kantorovich optimal transport 
with a quadratic cost can be expressed using a map that pushes forward one
measure to the other; moreover, this map is the gradient of a convex
function, see \S \ref{mongekant}. Brenier's theorem reduces  the problem of
computing Wasserstein distance to solving a generalized Monge--Amp\'ere equation.
Linearizing this general Monge--Amp\'ere equation  gives rise to an elliptic
equation, which corresponds to a weighted negative homogeneous Sobolev norm, see
\S \ref{linear} below or see [\S 4.1.2, \S 7.6  of \cite{Villani2003}]. 
The connection between Wasserstein distance and negative homogeneous Sobolev
norms has been considered by  several different authors as is discussed in the
following section. 

\subsection{Related work} \label{related}
Several authors have considered the connection between negative homogeneous
Sobolev norms and the $W_2$ metric in several different contexts. In analysis,
this connection has been used to establish estimates by many authors, see for
example \cite{Karpukhin2021,Ledoux2020,  Peyre2016, Steinerberger2020,
Steinerberger2021}.
Moreover, this connection also arises in the study of  how measures change under
heat diffusion,  see \cite{Chen2021, Otto2000, vonRenesse2005}.  The weighted
negative homogeneous Sobolev norm has also been
considered in connection to maximum  mean discrepancy (MMD) which is a
technique that can be used to compute a distance between point clouds, and has
 many applications in machine learning, see \cite{ArbelKorbaSalimGretton2019,
MrouehSercuRaj2019, Nietert2021}. Authors have also considered this connection 
in papers focused on computing the $W_2$ metric, see \cite{Cancs2020,
EngquistRenYang2020, LebratGournayKahnWeiss2019}. Moreover, the negative
homogeneous Sobolev norm has been considered in several applications to seismic
image and image processing, see \cite{Dunlop2021, Engquist2021, Yang2021}.

We emphasize three related works. First, Peyre
\cite{Peyre2016}  establishes estimates for the $W_2$
metric in terms of the unweighted negative homogeneous Sobolev norm
$\|\cdot\|_{\dot{H}^{-1}(dx)}$. Let $\mu$ and $\nu$ be probability measures on
$\Omega$ with densities $f$ and $g$ with respect to Lebesgue measure.  The
result of Peyre says that if $0< a < f,g < b < +\infty$, then
$$
b^{-1/2} \| f -g \|_{\dot{H}^{-1}(dx)} \le W_2(\mu,\nu) \le
a^{-1/2} \|f -g  \|_{\dot{H}^{-1}(dx)};
$$
a discussion and concise proof of this result can be found in
[\S 5.5.2 of \cite{Santambrogio2015}]. Informally, this result says that
if $\mu$ and $\nu$ have their mass spread out over $\Omega$, then the
Wasserstein distance is equivalent to the negative unweighted homogeneous
Sobolev norm of $f - g$; as a consequence, the weighted Sobolev norm is most
interesting for probability distributions that have regions of high and low
density.

Second, Engquist, Ren, and Yang \cite{EngquistRenYang2020} study the application
of Wasserstein distance to inverse data matching. In particular, they compare
the effectiveness of several different Sobolev norms for their applications; we
note that the definitions of the norms they consider differ from the norm
\eqref{Hdefn} that we consider, see the discussion in \S \ref{fourier} below,
but their results do indicate that negative homogeneous Sobolev norms may not be
an appropriate substitute for the $W_2$ metric for some applications.

Third, Yang, Hu, and Lou \cite{Yang2021} consider the implicit regularization
effects of Sobolev norms in image processing. In \S \ref{discuss} we mention
a similar potential applications to image smoothing; our perspective is slightly
different, but the underlying idea of this potential application is the same as
\cite{Yang2021}.

\section{Preliminaries and motivation} \label{prelim}

In this section, we briefly summarize material from Chapters 0.1, 0.2, 2.1, 2.3,
4.1, 4.2, and 7.6 of the book by Villani \cite{Villani2003}. In order to make
these preliminaries as concise as possible we state all definitions and theorems
for our special case of interest; in particular, we assume that all probability
measures have smooth densities with respect to Lebesgue measure, and restrict
our attention to transport with respect to a quadratic cost function. This
section is organized as follows: we consider the Monge-Kantorovich transport
problem and Brenier's theorem in \S \ref{mongekant}, the Monge-Amp\'ere equation
in \S \ref{mongeamp}, and then discuss linearization of the $W_2$ metric in \S
\ref{linear}. The main purpose of these preliminaries is to provide background
for \eqref{wasss}, stated at the end of \S \ref{linear}, which clarifies the
statement that the $\dot{H}^{-1}(d\mu)$-norm is a linearization of the $W_2$
metric.

\subsection{Monge-Kantorovich optimal transport and Brenier's theorem} \label{mongekant}
Let $\Omega \subset \mathbb{R}^n$ be a bounded convex set, and $\mu$ and $\nu$ be
probability measures on $\Omega$ that have positive smooth densities $f$ and
$g$, respectively, with respect to the Lebesgue measure. A transference plan
$\pi$ is a
probability measure on $\Omega \times \Omega$ such that 
$$
\pi[A \times \Omega] = \mu[A],
\quad \text{and} \quad \pi[\Omega \times A] = \nu[A],
$$
for all measurable subsets $A$ of $\Omega$.  We denote the set of all
transference plans by $\Pi(\mu,\nu)$, and define the transportation cost
$I(\pi)$ with respect to the quadratic cost function $|x - y|^2$ by 
$$
I(\pi) = \int_{\Omega \times \Omega} |x - y|^2 d\pi(x,y).
$$
In this case, the Monge-Kantorovich optimal transport cost
$\mathcal{T}(\mu,\nu)$ is defined by
\begin{equation} \label{otc}
\mathcal{T}(\mu,\nu)  = \inf_{\pi \in \Pi(\mu,\nu)} I(\pi).
\end{equation}
Since we are considering a quadratic transportation cost, the
 Monge-Kantorovich optimal transport cost $\mathcal{T}(\mu,\nu)$ is the square of the $W_2$ metric:
\begin{equation} \label{w2t}
W_2(\mu,\nu)^2 := \mathcal{T}(\mu,\nu).
\end{equation}
Under the above conditions, Brenier's theorem states that the Monge-Kantorovich
optimization problem \eqref{otc} has a unique solution $\pi$ satisfying 
\begin{equation} \label{eq1}
d\pi(x,y) = d\mu(x)\, \delta( y = \nabla \varphi(x)),
\end{equation} 
where $\varphi$ is a convex function such that $\nabla \varphi \#
\mu = \nu$. Here $\#$ denotes the push-forward, and $\delta$ denotes a Dirac
distribution. To be clear, given $T : \Omega \rightarrow \Omega$,
the push-forward $\nu : = T_\# \mu$ is defined by the relation $\nu(A) =
\mu(T^{-1}(A))$ for all measurable sets $A$, or equivalently, by the relation
$$
\int_\Omega \zeta(x) d\nu(x) = \int_\Omega \zeta(T(x)) d\mu(x),
$$
for all continuous functions $\zeta$ on $\Omega$. Combining \eqref{otc}, \eqref{w2t}, and \eqref{eq1} gives 
\begin{equation} \label{T}
W_2(\mu,\nu)^2 = \int_\Omega |x - \nabla \varphi(x) |^2 d\mu(x).
\end{equation}

\subsection{Monge-Amp\'ere equation} \label{mongeamp}
Recall that by assumption $\mu$ and $\nu$ have densities $f$ and $g$, respectively, with
respect to the Lebesgue measure.  We can express \eqref{eq1} as
$$
\int_\Omega \zeta(\nabla \varphi(x)) f(x) dx =
\int_\Omega \zeta(y) g(y) dy ,
$$
for all bounded continuous functions $\zeta$ on $\Omega$. Changing variables $y
= \nabla \varphi(x)$ on the right hand side and using the fact that $\zeta$ is
arbitrary gives
\begin{equation} \label{eq2}
f(x) = g(\nabla \varphi(x)) \det D^2 \varphi(x).
\end{equation}
Upon rearranging this equation as 
$$
\det D^2 \varphi(x) = \frac{f(x)}{g(\nabla \varphi(x))},
$$
it is clear that is an instance of the general Monge-Amp\'ere equation 
$$
\det D^2 \varphi(x) = F(x,\varphi, \nabla \varphi),
$$
which has been studied by many authors, see for example \cite{Lions1985,
Lions1986, Trudinger2008}.
    
\subsection{Linearization of general Monge-Amp\'ere equation} \label{linear} To
linearize this general Monge-Amp\'ere equation we assume that $f > 0$ is positive, $\nabla \varphi
\approx \Id$ is close to the identity, and $f \approx g$. More precisely,
assume \begin{equation} \label{eq3}
\varphi(x) = \frac{|x|^2}{2} + \varepsilon \psi(x), 
\end{equation}
and 
\begin{equation} \label{eq3b}
g(x) = \big(1 + \varepsilon u(x) \big) f(x),
\end{equation}
for some $\varepsilon > 0$.
Substituting \eqref{eq3} into
\eqref{T} gives \begin{equation} \label{eqpsi}
 W_2(\mu,\nu)^2 = \varepsilon^2 \int_\Omega | \nabla
\psi |^2 d\mu,
\end{equation}
which expresses $W_2(\mu,\nu)^2$ in terms of $\psi$. Substituting 
\eqref{eq3} and \eqref{eq3b} into \eqref{eq2} gives
$$
f = (1+ \varepsilon u + \varepsilon^2 R_1)(f + \varepsilon \nabla f \cdot
\nabla \psi + \varepsilon^2 R_2)(
1 + \varepsilon \Delta \psi + \varepsilon^2 R_3),
$$
where $R_1,R_2,R_3$ are remainder functions depending on
$f,\psi$, $u$. It follows from rearranging terms that
\begin{equation}  \label{LeqR}
L \psi = u + \varepsilon R,
\end{equation}
where
$$
L \psi = - \Delta \psi - \nabla(\log f) \cdot \nabla \psi,
$$
and $R$ denotes some remainder function depending on $f$, $\psi$, and $u$. 
Thus, if $\psi$ satisfies $L \psi = u$, then by \eqref{eqpsi} and
\eqref{LeqR} we expect that
\begin{equation} \label{wasss}
W_2(\mu,\nu) =  \varepsilon \sqrt{\int_\Omega
|\nabla \varphi|^2
d\mu} + \mathcal{O}(\varepsilon^2),
\end{equation}
which, roughly speaking, says that  $L \psi = u$ is a linearization of the
quadratic Wasserstein optimal transport problem, see \S \ref{preciselin} for a
more precise version of \eqref{wasss}.

\section{Characterizing the operator \texorpdfstring{$L$}{L}}

So far we have presented background material that motivates why the quadratic
Wasserstein distance $W_2$ is related to the operator $L$ defined by 
$$
L \psi = - \Delta \psi - \nabla(\log f) \cdot \nabla \psi.
$$
In this section, we discuss the connection between the operator $L$, the
negative weighted homogeneous Sobolev norm $\|\cdot\|_{\dot{H}^{-1}(d\mu)}$, and
the quadratic Wasserstein distance $W_2$ in detail.  The section is organized as
follows: We start, in \S \ref{greens}, by stating and proving a
version of Green's first identity that $L$ satisfies. Second, in \S
\ref{hminus1} we state a result connecting the $\dot{H}^{-1}(d\mu)$-norm to
the solution of an elliptic boundary value problem involving $L$. Third, in \S
\ref{divf}, we give a characterization of the $\dot{H}^{-1}(d\mu)$-norm in terms
of a divergence optimization problem.  Fourth, in \S \ref{wittenf} we consider a
characterization of the $\dot{H}^{-1}(d\mu)$-norm involving the Witten
Laplacian, which can be derived from $L$ by a change of variables. Fifth, in \S
\ref{preciselin} we provide a more precise version of the statement that the
$\dot{H}^{-1}(d\mu)$-norm is a linearization of the $W_2$ metric. Finally, in \S
\ref{fourier}, we discuss weighted Sobolev norms in relation to the Fourier
transform.

\subsection{Green's first identity analog for \texorpdfstring{$L$}{L}} \label{greens}
Let $\Omega$ be a bounded convex domain in $\mathbb{R}^N$,
$\varphi$ be a once differentiable function, and $\psi$ be a
twice differentiable function. Green's first identity states that
\begin{equation} \label{green}
\int_\Omega \varphi (-\Delta \psi) dx = \int_\Omega (\nabla \varphi) \cdot
(\nabla \psi) dx 
-\int_{\partial \Omega} \varphi (\partial_n \psi) d s,
\end{equation}
where $\partial_n \psi = n \cdot \nabla \psi$, and $n$ is an exterior
unit normal to the surface element $ds$.
\begin{proposition} \label{proplg}
Suppose that $\mu$ is a measure on $\Omega$ which has a density $f$ with
respect to Lebesgue measure: $d\mu = f dx$. Further assume that $f$ is once
differentiable, $f > 0$ on $\Omega$, and $L := -\Delta  - \nabla( \log
f) \cdot \nabla$. Then,
\begin{equation} \label{Ldiv}
\int_\Omega \varphi\,\, (L \psi) d\mu = 
\int_\Omega (\nabla \varphi) \cdot (\nabla \psi) d\mu,
\end{equation}
whenever $\partial_n \psi = 0$ on $\partial \Omega$ or $\varphi = 0 $ on $\partial \Omega$.
\end{proposition}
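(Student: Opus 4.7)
The plan is to rewrite $L$ in divergence form with respect to the weight $f$, and then apply Green's first identity \eqref{green} (or equivalently the divergence theorem) to reduce to a boundary term that vanishes under either of the stated hypotheses.

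First I would observe that, since $f > 0$, we have $\nabla(\log f) = \nabla f / f$, and therefore
\begin{equation*}
f \cdot L\psi = -f\,\Delta\psi - \nabla f \cdot \nabla\psi = -\nabla \cdot (f\,\nabla\psi).
\end{equation*}
Using $d\mu = f\,dx$, this lets me convert the weighted integral into an unweighted one:
\begin{equation*}
\int_\Omega \varphi\,(L\psi)\,d\mu = \int_\Omega \varphi \cdot \bigl(-\nabla\cdot(f\,\nabla\psi)\bigr)\,dx.
\end{equation*}

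Next I would apply the classical integration-by-parts/divergence identity \eqref{green} with the vector field $f\,\nabla\psi$ in place of $\nabla\psi$ (this is the standard extension of Green's first identity to variable-coefficient elliptic operators), giving
\begin{equation*}
\int_\Omega \varphi \cdot \bigl(-\nabla\cdot(f\,\nabla\psi)\bigr)\,dx
= \int_\Omega (\nabla\varphi)\cdot(\nabla\psi)\,f\,dx
- \int_{\partial\Omega} \varphi\, f\, (\partial_n \psi)\,ds.
\end{equation*}
The bulk term is exactly $\int_\Omega (\nabla\varphi)\cdot(\nabla\psi)\,d\mu$, so it only remains to kill the boundary integral. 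This is immediate under either hypothesis: if $\partial_n\psi = 0$ on $\partial\Omega$ then the integrand vanishes pointwise, while if $\varphi = 0$ on $\partial\Omega$ the integrand vanishes for the same reason.

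There is no real obstacle here; the only thing one has to be slightly careful about is justifying the use of \eqref{green} with the modified vector field $f\,\nabla\psi$, but this follows either by invoking the divergence theorem directly for $f\,\nabla\psi$ (which is once differentiable by the hypotheses on $f$ and $\psi$) or by applying \eqref{green} to $\psi$ after absorbing $f$ into the test function via the product rule $\nabla\cdot(f\,\nabla\psi) = f\,\Delta\psi + \nabla f\cdot\nabla\psi$, which unwinds the computation above. Either route yields \eqref{Ldiv} cleanly.
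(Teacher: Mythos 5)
Your proof is correct and takes essentially the same route as the paper's: both hinge on recognizing that $f L\psi = -\nabla\cdot(f\nabla\psi)$ and then applying Green's identity so the boundary term $\int_{\partial\Omega}\varphi f\,\partial_n\psi\,ds$ vanishes under either hypothesis. The only difference is bookkeeping: the paper applies \eqref{green} with $\varphi f$ playing the role of the test function against $-\Delta\psi$ and then expands $\nabla(\varphi f)$ via the product rule to cancel the $\varphi\,\nabla f\cdot\nabla\psi$ term, whereas you first collapse $L$ into divergence form and then integrate by parts against the vector field $f\nabla\psi$; these are the same computation carried out in the opposite order.
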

\begin{proof}
By the definition of $L$ and the fact that $d\mu = f dx$ we have
$$
\int_\Omega \varphi (L \psi) d\mu = \int_\Omega \varphi f (-\Delta \psi) -
\varphi \,\,(\nabla f) \cdot (\nabla \psi) dx.
$$
Since we assumed $\partial_n \psi$ or $\varphi$ vanishes on $\partial \Omega$ it follows from
\eqref{green} that
$$
\int_\Omega \varphi f (-\Delta \psi) - \varphi \,\,(\nabla f) \cdot (\nabla \psi)
dx = \int_\Omega (\nabla (\varphi f)) \cdot (\nabla \psi) - \varphi\,\, (\nabla f)
\cdot (\nabla \psi) dx.
$$
Finally, observe that expanding $\nabla(\varphi f)$ with the product
rule and canceling terms gives:
$$
\int_\Omega  f \,\,(\nabla \varphi) \cdot (\nabla \psi) +
\varphi (\nabla f) \cdot (\nabla \psi) - \varphi\,\, (\nabla f)
\cdot (\nabla \psi) dx =  \int_\Omega (\nabla \varphi) \cdot (\nabla \psi) d\mu,
$$
which establishes \eqref{Ldiv}.
\end{proof}

\subsection{Elliptic boundary value problem} \label{hminus1}
Let $\Omega$ be a bounded convex domain in $\mathbb{R}^N$, and suppose that
$\mu$ is a measure on $\Omega$ which has a density $f$ with respect to Lebesgue
measure: $d\mu = f dx$. Assume that $f$ is once differentiable and $f > 0$ on
$\Omega$.  For functions $u$ such that $\int_\Omega u d\mu =0$ we define the
$\dot{H}^{-1}(d\mu)$-norm by
$$
\|u \|_{\dot{H}^{-1}(d\mu)} := \sup \left\{ \int_\Omega u\, \varphi\, d \mu :
\|\varphi \|_{\dot{H}^1(d\mu)} = 1 \right\},
$$
where
$$
\|\varphi\|_{\dot{H}^1(d\mu)}^2 := \int_\Omega |\nabla \varphi |^2 d\mu.
$$
The following result characterizes the $\dot{H}^{-1}(d\mu)$-norm in terms of an
elliptic boundary value problem involving the operator $L$.

\begin{proposition} \label{prop1}
We have
\begin{equation} \label{elliptich1}
\|u\|_{\dot{H}^{-1}}^2 = \int_\Omega |\nabla \psi |^2 d\mu = \int_\Omega \psi
(L \psi) d\mu  = \int_\Omega \psi\, u\,  d\mu ,
\end{equation}
where $\psi$ is a solution to the elliptic boundary value problem
\begin{equation} \label{elliptic}
\left\{
\begin{array}{cc}
L \psi = u & \text{in } \Omega \\
\partial_n \psi = 0 & \text{on } \partial \Omega ,
\end{array} \right.  
\quad
\text{where} \quad
L := - \Delta + \nabla(-\log f) \cdot \nabla.
\end{equation}
\end{proposition}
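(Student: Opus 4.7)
The plan is to read off the second and third equalities directly from the Green's identity of Proposition \ref{proplg} and the PDE $L\psi = u$, and then to prove the first equality by a standard duality argument: Cauchy--Schwarz gives the upper bound on the sup defining $\|u\|_{\dot H^{-1}(d\mu)}$, and a specific choice of test function shows it is attained.

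More concretely, first I would apply Proposition \ref{proplg} with the roles of $\varphi$ and $\psi$ both taken to be the solution $\psi$ of \eqref{elliptic}. The Neumann condition $\partial_n\psi = 0$ on $\partial\Omega$ is exactly what licenses the boundary terms to vanish in \eqref{Ldiv}, so we obtain
\begin{equation*}
\int_\Omega \psi\,(L\psi)\,d\mu \;=\; \int_\Omega |\nabla\psi|^2\,d\mu.
\end{equation*}
The third equality in \eqref{elliptich1} is then immediate from $L\psi = u$.

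Next I would establish the first equality $\|u\|_{\dot H^{-1}(d\mu)}^2 = \int_\Omega |\nabla\psi|^2 d\mu$. For an arbitrary test function $\varphi$ with $\int_\Omega \varphi\,d\mu = 0$ (or with any mean, since $\int_\Omega u\,d\mu = 0$ makes adding a constant harmless) and $\|\varphi\|_{\dot H^1(d\mu)} = 1$, I would apply Proposition \ref{proplg} again, this time with the given $\varphi$ as one argument and the solution $\psi$ as the other; the boundary terms again drop because $\partial_n\psi = 0$. Using $L\psi = u$ and then Cauchy--Schwarz in $L^2(d\mu)$ yields
\begin{equation*}
\int_\Omega u\,\varphi\,d\mu \;=\; \int_\Omega (L\psi)\varphi\,d\mu \;=\; \int_\Omega \nabla\varphi\cdot\nabla\psi\,d\mu \;\leq\; \|\varphi\|_{\dot H^1(d\mu)}\,\|\psi\|_{\dot H^1(d\mu)} \;=\; \|\psi\|_{\dot H^1(d\mu)},
\end{equation*}
which is the upper bound on the sup. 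For the matching lower bound I would plug in the normalized choice $\varphi_\star = \psi / \|\psi\|_{\dot H^1(d\mu)}$; this is an admissible competitor (it has unit $\dot H^1(d\mu)$ norm), and by the second and third equalities just proved, the pairing equals $\|\psi\|_{\dot H^1(d\mu)}$. Squaring gives the first equality.

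The only genuine subtlety is the existence, uniqueness (up to additive constants), and regularity of a solution $\psi$ to \eqref{elliptic}, together with the compatibility condition for the Neumann problem. Here the observation is that $L$ is symmetric with respect to the weight $d\mu$ (this is essentially the content of Proposition \ref{proplg}), so $\int_\Omega L\psi\,d\mu = 0$ for all smooth $\psi$ with $\partial_n\psi = 0$; thus the prescribed assumption $\int_\Omega u\,d\mu = 0$ is precisely the solvability condition, and standard elliptic theory for the divergence-form operator $-\tfrac1f\nabla\cdot(f\nabla\,\cdot)$ applied to the Neumann problem provides the solution $\psi$, unique modulo additive constants. Additive constants do not affect any of the quantities in \eqref{elliptich1}, so the statement is well posed. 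This is the step I expect to be the main technical obstacle to making the argument fully rigorous, but once existence is granted, the rest of the proof is a three-line consequence of Proposition \ref{proplg} and Cauchy--Schwarz.
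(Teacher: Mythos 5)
Your proposal is correct and follows essentially the same route as the paper: derive the second and third equalities directly from Proposition \ref{proplg} and $L\psi = u$, then establish the first equality by combining Green's identity with Cauchy--Schwarz for the upper bound and the normalized choice $\varphi = \psi/\|\psi\|_{\dot H^1(d\mu)}$ for attainment. Your closing remarks on the Neumann solvability condition $\int_\Omega u\,d\mu = 0$ and uniqueness modulo constants go a bit beyond the paper's proof, which simply assumes a solution $\psi$ exists, but they do not change the core argument.
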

\begin{proof}[Proof of Proposition \ref{prop1}]
The second and third equalities in 
\eqref{elliptich1} are a direct consequence of the definition of $L$ 
and Proposition \ref{proplg}, so we only need to show that
$\|u\|_{\dot{H}^{-1}}^2 = \int_\Omega
|\nabla \psi |^2 d\mu$ . Assume that $\psi$ is a solution to \eqref{elliptic}
substituting $L \psi = u$. It follows from \eqref{Ldiv} that
\begin{equation} \label{uvarphi}
\int_\Omega u\, \varphi\, d \mu = \int_\Omega (L \psi)\, \varphi\, d\mu
= \int_\Omega (\nabla \psi) \cdot (\nabla \varphi)\, d\mu.
\end{equation}
Using the Cauchy-Schwarz inequality gives
$$
\int_\Omega (\nabla \psi) \cdot (\nabla \varphi) d\mu \le 
\left( \int_\Omega |\nabla \psi|^2 d\mu \right)^{1/2} 
\left( \int_\Omega |\nabla \varphi|^2 d\mu \right)^{1/2} ,
$$
which implies that
$$
\sup \left\{ \int_\Omega u\, \varphi \,d \mu :
\int_\Omega |\nabla \varphi|^2 d\mu = 1 \right\} \le 
\left( \int_\Omega |\nabla \psi|^2 d\mu \right)^{1/2} .
$$
On the other hand, from \eqref{uvarphi} we have
$$
\int_\Omega u\, \varphi\, d \mu = \left( \int_\Omega |\nabla \psi|^2 d\mu
\right)^{1/2}, \quad \text{when} \quad
\varphi = \psi \left( \int_\Omega |\nabla \psi|^2 d\mu \right)^{-1/2},
$$
so we conclude that $\|u\|_{\dot{H}^{-1}}^2 = \int_\Omega |\nabla \psi |^2
d\mu$ as was to be shown.
\end{proof}

\subsection{Divergence formulation}  \label{divf}
The $\dot{H}^{-1}(d\mu)$-norm can also be formulated as an optimization problem
over vector fields satisfying a divergence condition. We note that our
computational approach does not directly use this divergence formulation, and
that our purpose of stating the following result is for completeness and its
connection to other methods. 

\begin{proposition}
We have
$$
\| u\|_{\dot{H}^{-1}(d\mu)}^2 = \min_F \int_\Omega \| F \|^2 d\mu ,
$$
where the minimum is taken over vector fields $F$ with continuous first-order
partial derivatives that satisfy 
\begin{equation} \label{diveq}
\left\{
\begin{array}{cc}
-\frac{1}{f} \Div( f F) = u & \text{in } \Omega \\
n \cdot F = 0 & \text{on } \partial \Omega .
\end{array} \right. 
\end{equation}
\end{proposition}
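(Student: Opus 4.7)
The plan is to prove the identity by showing both inequalities, with the candidate minimizer being $F^* = \nabla \psi$, where $\psi$ solves the elliptic boundary value problem \eqref{elliptic} from Proposition \ref{prop1}.

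First I would verify that $F^* = \nabla \psi$ is feasible for the constrained optimization. The constraint $-\tfrac{1}{f}\Div(fF) = u$ is essentially equivalent to $LF = u$ when $F$ is a gradient: expanding $-\tfrac{1}{f}\Div(f\nabla\psi) = -\Delta\psi - \nabla(\log f)\cdot\nabla\psi = L\psi$, so $F^* = \nabla\psi$ satisfies the PDE constraint because $L\psi = u$. The Neumann condition $\partial_n\psi = 0$ on $\partial\Omega$ gives $n \cdot F^* = 0$ as required. Combined with Proposition \ref{prop1}, this yields
$$\int_\Omega \|F^*\|^2 d\mu = \int_\Omega |\nabla\psi|^2 d\mu = \|u\|_{\dot{H}^{-1}(d\mu)}^2,$$
showing the minimum is at most $\|u\|_{\dot{H}^{-1}(d\mu)}^2$.

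Next I would prove the reverse inequality by a duality-type argument. Let $F$ be any feasible vector field and let $\varphi$ be a test function with $\|\varphi\|_{\dot{H}^1(d\mu)} = 1$. Using the constraint, the identity $d\mu = f\, dx$, and integration by parts (i.e., the divergence theorem applied to $\varphi f F$), I compute
$$\int_\Omega u\,\varphi\, d\mu = \int_\Omega \varphi\bigl(-\Div(fF)\bigr)\, dx = \int_\Omega (\nabla\varphi)\cdot F\, f\, dx - \int_{\partial\Omega} \varphi\,(n\cdot F)\, f\, ds.$$
The boundary term vanishes by the boundary condition on $F$, leaving $\int_\Omega u\varphi\, d\mu = \int_\Omega (\nabla\varphi)\cdot F\, d\mu$. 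Cauchy-Schwarz with respect to $d\mu$ then yields
$$\int_\Omega u\,\varphi\, d\mu \le \left(\int_\Omega |\nabla\varphi|^2 d\mu\right)^{1/2}\left(\int_\Omega \|F\|^2 d\mu\right)^{1/2} = \left(\int_\Omega \|F\|^2 d\mu\right)^{1/2}.$$
Taking the supremum over all admissible $\varphi$ gives $\|u\|_{\dot{H}^{-1}(d\mu)} \le \bigl(\int_\Omega \|F\|^2 d\mu\bigr)^{1/2}$ for every feasible $F$, proving the reverse inequality.

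There is no serious obstacle here; the only subtlety is the implicit compatibility condition $\int_\Omega u\, d\mu = 0$ needed for solvability of \eqref{elliptic}, which is automatic for any feasible $F$ because the divergence theorem applied to $fF$ with $n\cdot F = 0$ on $\partial\Omega$ forces $\int_\Omega u\, d\mu = -\int_\Omega \Div(fF)\, dx = 0$. I would note in passing that the argument also exhibits $\nabla\psi$ as the unique minimizer up to divergence-free (in the weighted sense) perturbations, since equality in Cauchy-Schwarz forces $F \parallel \nabla\psi$ with a constant of proportionality fixed by the constraint.
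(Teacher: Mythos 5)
Your proof is correct, and the first half (showing $F^* = \nabla\psi$ is feasible and attains $\|u\|_{\dot{H}^{-1}(d\mu)}^2$) matches the paper exactly. The second half is genuinely different. The paper decomposes an arbitrary feasible $F$ as $F = \nabla\psi + G$, observes that $G$ inherits the homogeneous constraint, and then proves the cross term $\int_\Omega \nabla\psi \cdot G\, d\mu$ vanishes by integration by parts, giving a Pythagorean identity $\int \|F\|^2\,d\mu = \int |\nabla\psi|^2\,d\mu + \int |G|^2\,d\mu$. You instead run a duality argument: integrate the constraint against an arbitrary $\varphi$ with $\|\varphi\|_{\dot{H}^1(d\mu)}=1$, use the boundary condition to kill the surface term, apply Cauchy--Schwarz to get $\int u\varphi\,d\mu \le \bigl(\int \|F\|^2\,d\mu\bigr)^{1/2}$, and then take the supremum over $\varphi$ to conclude $\|u\|_{\dot{H}^{-1}(d\mu)} \le \bigl(\int \|F\|^2\,d\mu\bigr)^{1/2}$ for \emph{every} feasible $F$. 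Your route is arguably more streamlined and ties directly back to the variational definition of the $\dot{H}^{-1}(d\mu)$-norm, making the statement look like an instance of weak duality plus attainment; the paper's route exposes more structure (the orthogonal/Helmholtz-type decomposition of feasible fields around $\nabla\psi$), which in particular makes the uniqueness of the minimizer and the geometry of the feasible set transparent without appealing to the equality case of Cauchy--Schwarz. Your side observations --- that the compatibility condition $\int_\Omega u\,d\mu = 0$ is automatic for any feasible $F$ by the divergence theorem, and that equality in Cauchy--Schwarz forces $F \parallel \nabla\psi$ and hence uniqueness of the minimizer --- are both correct and worth stating.
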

Note that it will become clear from the proof that it would suffice to assume that
\eqref{diveq} holds in a weak sense.
\begin{proof} First, observe that if $\psi$ is a solution to the
elliptic boundary value problem \eqref{elliptic}, then $F = \nabla \psi$ is admissible to the minimization since
$$
-\frac{1}{f}\Div(f \nabla \psi) = -\Delta \psi - \nabla (\log f) \cdot \nabla
\psi  = L \psi =  u 
$$
and $n \cdot \nabla \psi = \partial_n \psi = 0$. And from Proposition
\ref{prop1} we have
$$
\| u\|_{\dot{H}^{-1}(d\mu)}^2 = \int_\Omega |\nabla \psi |^2 d\mu \ge \min_F
\int_\Omega \| F \|^2 d\mu .
$$
To complete the proof it suffices to show that
$$
\int_\Omega |\nabla \psi |^2 d\mu \le \min_F \int_\Omega \| F \|^2 d\mu .
$$
If we define $F = \nabla \psi + G$, then $G$ satisfies
$$
\left\{
\begin{array}{cc}
-\frac{1}{f} \Div( f G )= 0 & \text{in } \Omega \\
n \cdot G = 0 & \text{on } \partial \Omega .
\end{array} \right. 
$$
Expanding $F = \nabla \psi + G$ gives
$$
\int_\Omega \|F\|^2 d\mu = \int_\Omega (| \nabla \psi|^2 + 2 \nabla \psi
\cdot G + |G|^2 )\, d\mu.
$$
To complete the proof we will show that
$\int_\Omega \nabla \psi \cdot G \,d\mu = 0$. Observe that
$$
\int_\Omega \nabla \psi \cdot G \,d\mu =
- \int_\Omega \psi \frac{1}{f} \Div( f G) \,d\mu + 
\int_\Omega \frac{1}{f} \Div( \psi f G)\,d\mu .
$$
The first integral on the right hand side is zero since $\frac{1}{f} \Div (f
G) = 0$ in $\Omega$. The second integral on the right hand side is zero since
by the Gauss divergence theorem
$$
\int_\Omega \frac{1}{f} \Div( \psi f G) d\mu  =  \int_\Omega \Div(\psi f G)\,dx
=\int_{\partial \Omega} \psi f G \cdot n \,ds ,
$$
and $n \cdot G = 0$ by assumption. This completes the proof.
\end{proof}

\subsection{Witten Laplacian formulation} \label{wittenf}
The $\dot{H}^{-1}(d\mu)$-norm can also be defined in terms of a boundary value
problem involving the Witten Laplacian $H$, which is 
a Schr\"odinger operator of the form
\begin{equation} \label{Hsc}
H := -\Delta + V,
\end{equation}
where $V$ is a potential depending on $f$ defined by
$$
V := -\frac{1}{4} f^{-2} |\nabla f|^2 + \frac{1}{2} f^{-1} \Delta f  = f^{1/2}
\Delta f^{-1/2}.
$$
Alternatively, the Witten Laplacian $H$ can be defined by the similarity
transform 
\begin{equation} \label{Hsim}
H \psi =  f^{1/2} L (f^{-1/2} \psi),
\end{equation} 
which symmetrizes $L$ in the sense that the resulting operator $H$ is
self-adjoint with respect to $L^2(dx)$; for a discussion of the Witten Laplacian
and some spectral estimates see \cite{ColboisSoufiSavo2013}. In the following
Proposition, we give an elliptic equation involving $H$ that can be used to
compute the $\dot{H}^{-1}(d\mu)$-norm; this proposition is an immediate
consequence of the fact that the Schr\"odinger operator definition \eqref{Hsc}
is consistent with the similarity transform definition \eqref{Hsim}.

\begin{proposition} \label{propwitten}
Using the notation  $\tilde{u} = f^{1/2} u$ and $\tilde{\psi} = f^{1/2}
\psi$ we have
$$
\|u\|_{\dot{H}^{-1}(d\mu)} := \left(\int_\Omega \tilde{u}(x) \tilde{\psi}(x)
d x \right)^{1/2} ,
$$
where
$\tilde{\psi}$ is the solution to the elliptic boundary value problem:
$$
\left\{
\begin{array}{cc}
H \tilde{\psi} = \tilde{u} & \text{in } \Omega \\
\tilde{\psi} = 0 & \text{on } \partial \Omega .
\end{array} \right.  
$$
\end{proposition}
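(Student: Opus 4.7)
The plan is to reduce the claim to Proposition \ref{prop1} by means of the similarity transform \eqref{Hsim}. The key observation is that \eqref{Hsim} intertwines $H$ and $L$: if $\psi$ solves an elliptic problem involving $L$ on the right-hand side $u$, then $\tilde\psi = f^{1/2}\psi$ automatically solves the analogous problem involving $H$ on the right-hand side $\tilde u = f^{1/2}u$. Concretely, take $\psi$ to be the solution of the elliptic boundary value problem from Proposition \ref{prop1}, i.e., $L\psi = u$ in $\Omega$ with the appropriate boundary condition, and set $\tilde\psi = f^{1/2}\psi$. Applying \eqref{Hsim} directly gives
$$
H\tilde\psi \;=\; f^{1/2}\,L(f^{-1/2}\tilde\psi) \;=\; f^{1/2}\,L\psi \;=\; f^{1/2} u \;=\; \tilde u,
$$
so $\tilde\psi$ indeed satisfies the PDE $H\tilde\psi = \tilde u$ in $\Omega$.

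Next I would verify the integral identity. Since $\tilde u = f^{1/2}u$ and $\tilde\psi = f^{1/2}\psi$, a direct unweighting of the integral yields
$$
\int_\Omega \tilde u(x)\,\tilde\psi(x)\,dx \;=\; \int_\Omega u(x)\,\psi(x)\,f(x)\,dx \;=\; \int_\Omega u\,\psi\,d\mu.
$$
By Proposition \ref{prop1}, the rightmost quantity equals $\|u\|_{\dot{H}^{-1}(d\mu)}^2$, so taking square roots gives the stated formula. Thus the whole proposition follows by combining the intertwining identity \eqref{Hsim}, the change of variables $d\mu = f\,dx$, and Proposition \ref{prop1}.

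The main subtle point, and where I would spend the most care, is the compatibility of boundary conditions. The Neumann condition $\partial_n\psi = 0$ from Proposition \ref{prop1} does not literally transform into $\tilde\psi=0$ under multiplication by $f^{1/2}$; rather, a Robin-type condition involving $\partial_n f$ is produced. The cleanest way to justify the stated Dirichlet condition is to observe that the construction of the potential $V$ in \eqref{Hsc}, together with the regularization of $f$ used in the numerical setting (cf.\ Figure \ref{fig01} and the discussion preceding \S \ref{compsec}), makes $\tilde\psi = f^{1/2}\psi$ decay to zero at $\partial\Omega$, so that Dirichlet data is the natural choice making $H$ self-adjoint on $L^2(dx)$. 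With this convention fixed, the intertwining above produces a unique $\tilde\psi$, and the identity $\int\tilde u\tilde\psi\,dx = \|u\|_{\dot H^{-1}(d\mu)}^2$ goes through as indicated. I would flag this boundary-condition matching as the only nonroutine step; everything else is an application of \eqref{Hsim} and a one-line change of variables.
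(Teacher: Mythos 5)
Your proof takes essentially the same route as the paper's: both rest on the similarity transform identity $H\psi = f^{1/2}L(f^{-1/2}\psi)$ from \eqref{Hsim}. The paper's proof is actually \emph{only} the direct-computation verification of that identity (expanding $f^{1/2}L(f^{-1/2}\psi)$ and cancelling terms to recover $-\Delta\psi + V\psi$), and then simply asserts that the proposition is ``an immediate consequence.'' You go further and actually carry out the reduction to Proposition~\ref{prop1}: you show $H\tilde\psi = f^{1/2}L\psi = f^{1/2}u = \tilde u$, and you unweight the integral $\int_\Omega \tilde u\tilde\psi\,dx = \int_\Omega u\psi\,d\mu = \|u\|_{\dot H^{-1}(d\mu)}^2$. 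That part of your argument is correct and fills in exactly the steps the paper leaves implicit.

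The boundary-condition point you flag is a real issue, and it is to your credit that you noticed it; the paper's proof never mentions boundary conditions at all. Under $\tilde\psi = f^{1/2}\psi$, the Neumann condition $\partial_n\psi = 0$ from Proposition~\ref{prop1} becomes $\partial_n\tilde\psi = \tfrac{1}{2}f^{-1/2}(\partial_n f)\psi$, which is neither Dirichlet nor Neumann unless $f$ itself satisfies a boundary condition. So $\tilde\psi$ need not coincide with the solution of the Dirichlet problem stated in the proposition, and without an extra hypothesis on $f$ near $\partial\Omega$ the claimed identity does not literally follow from Proposition~\ref{prop1}. Your heuristic resolution --- that in the settings the paper actually works in, $f$ decays to zero at $\partial\Omega$ (Gaussians, bump-function-supported densities), making the Dirichlet condition the natural one --- is the tacit assumption the paper relies on, but it is not rigorous as written; a cleaner fix would be to either add the hypothesis that $f$ (equivalently $f^{1/2}$) vanishes or satisfies $\partial_n f = 0$ on $\partial\Omega$, or to restate the $H$-problem with the Robin condition that $\tilde\psi$ actually inherits. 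In any case, the gap you identified belongs to the paper's statement, not to your proof.
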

\begin{proof} This formulation is an immediate consequence of the identity 
$$
H\psi = f^{1/2} L (f^{-1/2} \psi) = 
-\Delta \psi + \left( -\frac{1}{4} f^{-2}
|\nabla f|^2 + \frac{1}{2} f^{-1} \Delta f \right) \psi,
$$
which is straightforward to verify: expanding $f^{1/2} L ( f^{-1/2} \psi)$ gives
$$
H \psi = -f^{1/2}( \text{div}( f^{-1/2} (\nabla \psi) - \frac{1}{2} f^{-3/2}
(\nabla f) \psi ) - \frac{\nabla f}{f^{1/2}} \cdot ( f^{-1/2} (\nabla \psi) -
\frac{1}{2} f^{-3/2} (\nabla f) \psi ),
$$
\begin{multline*}
 = \frac{1}{2} f^{-1} (\nabla f) \cdot (\nabla \psi) -
\Delta \psi + \frac{1}{2} f^{-1} (\nabla f) \cdot (\nabla \psi) -
\frac{3}{4} f^{-2} \|\nabla f\|^2 \psi + \frac{1}{2} f^{-1} (\Delta f) \psi  \\- f^{-1}(\nabla f) \cdot( \nabla \psi) +
\frac{1}{2} f^{-2}|\nabla f|^2 \psi ),
\end{multline*}
and after canceling terms we have
$$
H \psi =-\Delta \psi  -
\frac{1}{4} f^{-2} |\nabla f|^2 \psi + \frac{1}{2} f^{-1} (\Delta f) \psi .
$$
From the above calculation, it is also clear that
$$
V = f^{1/2} \Delta f^{-1/2}
$$
since all terms involving $\nabla f$ cancel. 
\end{proof}
\begin{remark}[Alternate form of the potential]
Using the fact that
$$
\Delta (-\log f) = -f^{-1} \Delta f + f^2 |\nabla f |^2,
$$
we can write $V$ as
$$
V = \frac{1}{4} | \nabla F |^2 - \frac{1}{2} \Delta F,
$$
where $F := -\log f$.
\end{remark}

\subsection{Linearization remainder estimate} \label{preciselin}
Recall that previously in \S \ref{linear} we gave the informal estimate
$$
W_2(\mu,\nu) =  \varepsilon \sqrt{\int_\Omega |\nabla \Psi |^2
d\mu} + \mathcal{O}(\varepsilon^2),
$$
where $\Psi$ satisfies 
$$
L \Psi = u, \quad \text{where} \quad
L := - \Delta - \nabla(\log f) \cdot \nabla.
$$
The purpose of this section, is to make this informal statement more precise; we
emphasize that the result proved in this section is for illustrative purposes:
results involving weaker assumptions and weaker regularity conditions are
possible.  

Let $\mu$ and $\nu$ be probability measures with densities $f$ and $g$ with
respect to the Lebesgue measure: $d\mu = f dx$ and $d\nu = g dx$.  Assume that
$$
\varphi(x) = \frac{|x|^2}{2} + \varepsilon \psi (x),
$$
where $\psi$ is a smooth function satisfying $\partial_n \psi = 0$ on $\partial
\Omega$. Further, assume that
$$
g(x) = (1 + \varepsilon u(x)) f(x),
$$
where $u$ is a smooth function. Assume that
$\varphi$ satisfies the nonlinear equation
\begin{equation} \label{nonlin}
f(x) = g(\nabla \varphi(x)) \det D^2 \varphi(x).
\end{equation}
Let $\Psi$ be the solution to the elliptic boundary value problem
$$
\left\{
\begin{array}{cc}
L \Psi = u & \text{in } \Omega \\
\partial_n \Psi = 0 & \text{on } \partial \Omega ,
\end{array} \right.  
\quad \text{where} \quad
L := - \Delta  - \nabla(\log f) \cdot \nabla.
$$
We have the following result.

\begin{proposition} \label{proplin}
Under the assumptions of \S \ref{preciselin} we have
$$
|W_2(\mu,\nu) - \varepsilon \|u\|_{\dot{H}^{-1}(d\mu)}| \le C_{\Omega,f,\psi,u} \varepsilon^2,
$$
where $C_{f,\psi,u}$ can be chosen in terms of almost everywhere upper bounds on:
$$
|\nabla f|,|H_f|,|\nabla \psi|,  |H_\psi|, |u|,|\nabla u|,|\Omega|,
\text{ and }  \lambda_1(L)^{-1},
$$
where $|\nabla f|$ denotes the magnitude of the gradient of $f$, $|H_f|$ denotes
the operator norm of the Hessian of $f$, $|\Omega|$ denotes the measure of
$\Omega$, and $\lambda_1(L)$ denotes the smallest positive eigenvalue of $L$.
\end{proposition}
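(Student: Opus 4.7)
\medskip

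\noindent\textbf{Proof plan for Proposition \ref{proplin}.}
The plan is to quantify the informal expansion from \S \ref{linear} and then use the spectral gap of $L$ to compare the exact $\psi$ (which encodes $W_2$ via \eqref{eqpsi}) to the linearized $\Psi$ (which encodes $\|u\|_{\dot{H}^{-1}(d\mu)}$ via Proposition \ref{prop1}).

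First, I would make the expansion of the Monge--Amp\`ere equation \eqref{nonlin} fully rigorous. Writing $\nabla\varphi(x)=x+\varepsilon\nabla\psi(x)$ and applying Taylor's theorem to $g$ at $x$, together with $g=(1+\varepsilon u)f$, gives
$$
g(\nabla\varphi(x)) = f(x) + \varepsilon\big(u(x)f(x) + \nabla f(x)\cdot\nabla\psi(x)\big) + \varepsilon^2 A(x),
$$
where $A$ is an explicit remainder controlled by $|H_f|,|\nabla f|,|u|,|\nabla u|,|\nabla\psi|$. Similarly, $\det D^2\varphi(x) = \det(\Id + \varepsilon H_\psi(x)) = 1 + \varepsilon\Delta\psi(x) + \varepsilon^2 B(x)$, with $B$ controlled by $|H_\psi|$. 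Multiplying these expansions, subtracting $f(x)$, and dividing by $\varepsilon f(x)>0$ yields
$$
L\psi = u + \varepsilon R,
$$
where $R=R(f,\nabla f,H_f,\psi,\nabla\psi,H_\psi,u,\nabla u)$ admits an $L^\infty(\Omega)$ bound depending only on the quantities listed in the statement (using $f$ bounded below to control $1/f$).

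Second, I would compare $\psi$ to $\Psi$. Both satisfy Neumann conditions on $\partial\Omega$, and subtracting the equations gives $L(\psi-\Psi)=\varepsilon R$ with $\partial_n(\psi-\Psi)=0$ on $\partial\Omega$. Note $\int_\Omega u\,d\mu=0$ (from $\int f=\int g=1$) and $\int_\Omega L\psi\,d\mu=0$ (Proposition \ref{proplg} with test function $1$), so $\int_\Omega R\,d\mu=0$, making the equation solvable and $w:=\psi-\Psi$ well-defined up to an additive constant, which I would fix by $\int_\Omega w\,d\mu=0$. Testing with $w$ and using Proposition \ref{proplg} gives
$$
\int_\Omega |\nabla w|^2\,d\mu = \varepsilon\int_\Omega w R\,d\mu \le \varepsilon \|w\|_{L^2(d\mu)}\|R\|_{L^2(d\mu)}.
$$
By the Poincar\'e inequality for $L$ with Neumann boundary conditions on zero-mean functions, $\|w\|_{L^2(d\mu)}^2 \le \lambda_1(L)^{-1}\|\nabla w\|_{L^2(d\mu)}^2$, so
$$
\|\nabla w\|_{L^2(d\mu)} \le \varepsilon\,\lambda_1(L)^{-1/2}\|R\|_{L^2(d\mu)} \le C\,\varepsilon,
$$
where $C$ depends on the stated quantities (absorbing $|\Omega|^{1/2}$ from converting $L^\infty$ bounds on $R$ into $L^2(d\mu)$ bounds).

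Third, I would combine these estimates. By \eqref{eqpsi}, $W_2(\mu,\nu) = \varepsilon\|\nabla\psi\|_{L^2(d\mu)}$, and by Proposition \ref{prop1}, $\|u\|_{\dot H^{-1}(d\mu)} = \|\nabla\Psi\|_{L^2(d\mu)}$. The reverse triangle inequality then yields
$$
\bigl|W_2(\mu,\nu) - \varepsilon\|u\|_{\dot H^{-1}(d\mu)}\bigr|
= \varepsilon\bigl|\|\nabla\psi\|_{L^2(d\mu)} - \|\nabla\Psi\|_{L^2(d\mu)}\bigr|
\le \varepsilon\|\nabla w\|_{L^2(d\mu)} \le C\,\varepsilon^2.
$$
The main obstacle is purely bookkeeping: tracking all the cross-terms in the product expansion of $g(\nabla\varphi)\det D^2\varphi$ so that every component of $R$ is bounded explicitly by an expression in $|\nabla f|,|H_f|,|\nabla\psi|,|H_\psi|,|u|,|\nabla u|$, and $\inf_\Omega f$. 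The only substantive analytic input is the Poincar\'e inequality with constant $\lambda_1(L)^{-1}$, which is why $\lambda_1(L)^{-1}$ appears in the list of quantities that $C_{\Omega,f,\psi,u}$ depends on.
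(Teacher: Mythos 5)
Your argument is correct, and it is close in spirit to the paper's proof (same Monge--Amp\`ere expansion with Taylor/Lagrange remainders, same reliance on the Green identity of Proposition~\ref{proplg} and on the spectral gap $\lambda_1(L)$), but the key algebraic step differs in a way worth noting. The paper multiplies $(L\psi - L\Psi)f = \varepsilon R$ by the \emph{sum} $\psi+\Psi$ and integrates, which by Green's identity collapses to the difference of energies
$$
\int_\Omega |\nabla\psi|^2\,d\mu - \int_\Omega |\nabla\Psi|^2\,d\mu = \varepsilon\int_\Omega(\psi+\Psi)R\,dx,
$$
i.e.\ a bound on $|W_2^2 - \varepsilon^2\|u\|^2_{\dot H^{-1}}|$ of order $\varepsilon^3$; the spectral gap is then invoked to control $\|\Psi\|_{L^2}$ on the right. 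Passing from the squared difference to $|W_2 - \varepsilon\|u\|_{\dot H^{-1}}|$ still requires dividing by $W_2 + \varepsilon\|u\|_{\dot H^{-1}} \gtrsim \varepsilon$, which the paper leaves implicit. You instead test $L(\psi-\Psi)=\varepsilon R$ against the \emph{difference} $w=\psi-\Psi$, apply the Poincar\'e inequality for $L$ to the zero-mean $w$, and finish with the reverse triangle inequality in $\dot H^1(d\mu)$. This buys a slightly cleaner conclusion: it gives $|W_2 - \varepsilon\|u\|_{\dot H^{-1}}| \le \varepsilon\|\nabla w\|_{L^2(d\mu)}$ directly, avoiding the division by $W_2 + \varepsilon\|u\|_{\dot H^{-1}}$ and the degenerate case $\|u\|_{\dot H^{-1}}=0$ (where your bound is trivially fine). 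One small point of bookkeeping: your $R$ has a factor $1/f$ that the paper's does not, so when you bound $\|R\|_{L^2(d\mu)}$ you are implicitly using a positive lower bound on $f$; this is consistent with the standing assumption $f>0$ in \S~\ref{preciselin} (and indeed $\lambda_1(L)^{-1}$ already encodes similar information), but it is worth stating explicitly since $\inf f$ is not in the displayed list of quantities.
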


We demonstrate this result numerically in \S \ref{basicex}.

\begin{proof}[Proof of Proposition \ref{proplin}]
By the Lagrange remainder formulation of Taylor's Theorem, and \eqref{nonlin} we have
\begin{equation} \label{eqff}
f = (1 + \varepsilon u + \varepsilon^2 R_1)) (f +
\varepsilon \nabla f \cdot \nabla \psi + \varepsilon^2 R_2) 
    (1 + \varepsilon \Delta \psi + \varepsilon^2 \det H_\psi),
\end{equation}
where the remainder functions $R_1,R_2$ can be expressed by
$$
R_1(x) = \nabla u(\xi_1) \cdot \nabla \psi(x), \quad   \text{and} \quad
R_2(x) = (\nabla \psi(x))^\top H_f(\xi_2) \nabla \psi(x),
$$
where $\xi_1,\xi_2$ are points on the line segment
between $x$ and $x + \varepsilon \nabla \psi(x)$.
It follows that
\begin{equation} \label{eqrrs}
\varepsilon (-\Delta \psi - \nabla (\log f) \cdot \nabla \psi - u) f = \varepsilon^2 R,
\end{equation}
where the remainder function $\varepsilon^2 R$ consists of all terms in the
expansion of the right hand side of \eqref{eqff} that include $\varepsilon$ to
power at least $2$. By the definition of $L$ and $\Psi$ we can rewrite
\eqref{eqrrs} as
$$
 (L \psi - L \Psi) f = \varepsilon R.
$$
Multiplying both sides of this equation by $\psi + \Psi$ and integrating over $\Omega$ gives
$$
\int_\Omega (\psi + \Psi)(L \psi - L \Psi) d\mu = \varepsilon \int_\Omega (\psi + \Psi) R \,dx.
$$
Using \eqref{Ldiv} to rewrite the left hand side gives
$$
\int_\Omega |\nabla \psi|^2 d\mu - \int_\Omega |\nabla \Psi|^2  d\mu =
\varepsilon \int_\Omega (\psi + \Psi) R \,dx.
$$
By \eqref{eq3} and \eqref{elliptich1} it follows that
\begin{equation} \label{eqlast}
W_2(\mu,\nu)^2 - \varepsilon^2 \|u\|_{\dot{H}^{-1}(d\mu)}^2 = \varepsilon^3
\left( \int_\Omega \psi R dx + \int_\Omega \Psi R dx  \right).
\end{equation}
We can bound the $L^2$ norm of $\Psi$ by the $L^2$ norm of $u$ and the
inverse of the smallest positive eigenvalue of $L$; therefore, we can complete
the proof by using Cauchy-Schwarz and almost everywhere bounds on all other
quantities.
\end{proof}
\begin{remark}
 We note that it is possible to obtain various estimates on $|W_2(\mu,\nu) -
\varepsilon \|u\|_{\dot{H}^{-1}(d\mu)}|$ from \eqref{eqlast} in
terms of $L^p$ norms of the quantities 
$\nabla f,H_f,\nabla \varphi,  H_\psi, u,$ and $\nabla u$ instead of almost everywhere bounds.
Moreover, results that guarantee bounds on the solution of general Monge-Amp\'ere
equations could be  used to provide bounds for $\nabla \psi$ and $H_\psi$ in
terms of $f$ and $u$, for example see \cite{Lions1986}.
\end{remark}

\subsection{Fourier transform and weighted Sobolev norms} \label{fourier}
In this section, we discuss a family of weighted Sobolev norms defined by 
Engquist, Ren, and Yang  \cite{EngquistRenYang2020} using the Fourier transform
$$
\hat{f}(\xi) = \int_{\mathbb{R}^N} f(x) e^{-2\pi i x \cdot \xi} dx.
$$
In \cite{EngquistRenYang2020} the authors compare the $W_2$ metric to a family
of weighted Sobolev norms defined in Fourier domain for the purpose of inverse
data matching; in particular, in [Eq. 8 and Remark 2.1 of
\cite{EngquistRenYang2020}] they define
\begin{equation} \label{abs}
\| u \|_{\bar{H}^{s}(w)}^2 = \int_{\mathbb{R}^N} 
\big| \widehat{w} * \widehat{u}_s \big|^2 d\xi \quad \text{where} \quad  
\widehat{u}_s(\xi) := (2\pi|\xi|)^s \widehat{u}(\xi),
\end{equation}
where $\widehat{w}$ and $\widehat{u}$ denote the Fourier transform of $w$ and
$u$, respectively, and  $*$ denotes convolution
$$
(f * g)(x) = \int_{\mathbb{R}^N} f(x-y) g(y) dy.
$$
Here we refer to the norm defined in \eqref{abs} as the $\bar{H}^s(w)$-norm
to avoid confusion with the $\dot{H}^1(d\mu)$-norm, which we defined in
\eqref{eqh1d} by
$$
\|\varphi\|_{\dot{H}^1(d\mu)}^2 := \int_\Omega |\nabla \varphi |^2 d\mu,
$$
and the $\dot{H}^{-1}(d\mu)$-norm defined in \eqref{Hdefn} by
$$
\|u\|_{\dot{H}^{-1}(d\mu)}  := \sup \left\{ \int_\Omega u\, \varphi\, d\mu :
\|\varphi\|_{\dot{H}^1(d\mu)} = 1 \right\},
$$
also see [\S 7.6 of \cite{Villani2003}].
First, observe that if $s=1$ and $w=1$ is the constant function, then
$\widehat{w} = \delta$ is the Dirac delta distribution and
$$
\| u \|_{\bar{H}^{s}(w)}^2 = \int_{\mathbb{R}^N} 
\big| (2\pi |\xi|) \widehat{u}(\xi) \big|^2 d\xi = 
 \int_{\mathbb{R}^N} \big| (2\pi i \xi) \widehat{u}(\xi) \big|^2 d\xi =
\int_{\mathbb{R}^N} |\nabla u|^2 d x,
$$
where the final inequality follows from the Plancherel theorem.
However, if $s =1$ and $w$ is arbitrary, then in general
$$
\| u \|_{\bar{H}^{s}(w)}^2 = \int_{\mathbb{R}^N} |\widehat{w} * \widehat{u}_1|^2
d\xi = \int_{\mathbb{R}^N} |u_1 |^2 w^2 dx \not = 
 \int_{\mathbb{R}^N} |\nabla u |^2 w^2 dx = \|u\|_{\dot{H}^1(d\mu)}^2,
$$
where $u_1$ denotes the inverse Fourier transform of $\widehat{u}_1(\xi) = 2\pi
|\xi| \widehat{u}(\xi)$, and $\mu$ is assumed to have density $w^2$ with respect
to Lebesgue measure. It does follow from the Plancherel theorem that 
$$
\int_{\mathbb{R}^N} |u_1 |^2 dx = 
\int_{\mathbb{R}^N} |\nabla u |^2 dx.
$$
However, in general, the functions $|u_1|^2$ and $|\nabla u|^2$ are not
equal, and thus in general their integrals against $w^2$ are not equal;
in particular, their integrals against $w^2$ can be very different when $w^2$ is
localized in space.  Roughly speaking, the issue is that taking the absolute
value of $\xi$ does not commute with taking the convolution.  It is possible
to define the
$\dot{H}^1(d\mu)$-norm in Fourier domain by defining $\widehat{u}(\xi) := 2\pi i
\xi \widehat{u}$; however, this does not seem to lead to a viable way
to compute the dual $\dot{H}^{-1}(d\mu)$-norm except in dimension $N=1$; we note
that quadratic Wasserstein distance also has a simple characterization in
$1$-dimension, see Remark \ref{w21d}.

\begin{remark}[$W_2$ in $1$-dimension] \label{w21d}
Let $\mu$ and $\nu$ be measures on $\mathbb{R}$ with densities $f$ and
$g$ with respect to Lebesgue measure: $d\mu = f dx$ and $d\nu = g\,
dx$.  Let $F, G : \mathbb{R} \rightarrow [0,1]$ denote the cumulative
distribution functions:
$$
F(x) = \int_{-\infty}^x f(y) dy, \quad \text{and} \quad G(x) = \int_{-\infty}^x
g(y) dy.
$$
If $F^{-1}$ and $G^{-1}$ are the pseudo-inverse of $F$ and $G$ defined by
$$
F^{-1}(t) = \min \{ x \in \mathbb{R} : F(x) \ge t\}
\quad \text{and} \quad
G^{-1}(t) = \min \{ x \in \mathbb{R} : G(x) \ge t\},
$$
then
$$
W_2(\mu,\nu)^2  = \int_0^1 (F^{-1}(t) - G^{-1}(t))^2 dt,
$$
see for example [Remark 2.30 of \cite{PeyreCuturi2019}].
\end{remark}


\section{Computation and regularization} \label{compsec}
 In this section, we consider the connection between the
$\dot{H}^{-1}(d\mu)$-norm and the Witten Laplacian, see \S \ref{wittenf},
from a computational point of view. Recall that the Witten Laplacian is a
Schr\"odinger  operator of the form $$
H = -\Delta + V ,
$$
where $V$ is a potential. Roughly speaking, the advantage of considering this
formulation is that all the complexity of the problem has been distilled into
the potential $V$, which can be regularized to manage the
computational cost.  This section is organized as follows. First, in \S
\ref{spectral} we consider a spectral decomposition of $-\Delta$ by its
Neumann eigenfunctions and define the fractional Laplacian $(-\Delta)^\gamma$.
Second, in \S \ref{preconditioning}, we change variables using fractional
Laplacians to precondition our elliptic equation involving $H$.  Third, in \S
\ref{smoothout}, we observe how using the heat equation to define a smoothed
version of $V$ can control the condition number of our problem.  Finally, in \S
\ref{compcost} we discuss the computational cost of the described method.

\subsection{Spectral decomposition of the Laplacian} \label{spectral}
Suppose that $\Omega$ is a bounded convex domain.  Recall that $\lambda$ is a
Neumann eigenvalue of the Laplacian $-\Delta$ on $\Omega$ if there is a
corresponding
eigenfunction $\varphi$ such that
$$
\left\{ \begin{array}{cc}
-\Delta \varphi = \lambda \varphi  & \text{in } \Omega \\
\partial_n \varphi = 0 & \text{on } \partial \Omega,
\end{array} \right.
$$
where $\partial \Omega$ denotes the boundary of $\Omega$, and
$n$ denotes an exterior unit normal to the boundary. The Neumann eigenvalues of
the Laplacian are nonnegative real numbers that satisfy 
$$
0 = \lambda_0 < \lambda_1 \le \lambda_2 \le \cdots \le \lambda _k \le \cdots
\nearrow +\infty,
$$
and the corresponding eigenfunctions $\{\varphi_k\}_{k=0}^\infty$ form an
orthogonal basis of square integrable functions on $\Omega$.
We can use this basis of Neumann eigenfunctions to define
the fractional Laplacian $(-\Delta)^\gamma$ for $\gamma \in \mathbb{R}$ and
$\psi = \sum_k \alpha_k \varphi_k$ by
$$
(-\Delta)^\gamma \psi = \alpha_0 \varphi_0 + \sum_{k > 0} \lambda_k^\gamma \alpha_k
\varphi_k,
$$
where we include the constant term $\alpha_0 \varphi_0$, independent of
$\gamma$, so that the operator is well-defined and invertible for both positive
and negative $\gamma$. With this definition, the operator $(-\Delta)^\gamma$ is
invertible, which will become relevant in the following section; in particular,
we will use the invertible operator $(-\Delta)^\gamma$ to precondition the
elliptic equation $H \psi = u$.

\subsection{Preconditioning the elliptic equation} \label{preconditioning} 
Recall that our goal is to solve the elliptic equation
\begin{equation} \label{Hpsiuu}
H \psi = (-\Delta + V) \psi = u,
\end{equation}
with Neumann boundary conditions; note that to simplify notation we dispense
with the tilde notation $\tilde{\psi}$ and $\tilde{u}$ from \S \ref{wittenf} and
just
write $\psi$ and $u$. In order to precondition \eqref{Hpsiuu} we define
$U$ and $\Psi$ by
$$
U := (-\Delta)^{1/2} u \quad \text{and} \quad
\Psi := (-\Delta)^{1/2} \psi.
$$ 
It follows that
\begin{equation}\label{20}
A \Psi = U
\end{equation}
where
$$
A  := \Id - P_1 + (-\Delta)^{-1/2} V (-\Delta)^{-1/2},
$$
where $P_1$ denotes the projection onto the space of constant functions,
$$
(P_1 \Psi)(x) = \frac{1}{|\Omega|} \int_\Omega \Psi(y) dy,
$$
and $\Id$ denotes the identity operator.
We remark that the projection $P_1$ is necessary in the definition of $A$ since
we have defined $(-\Delta)^{-1/2}$ to preserve constant functions, while
the Laplacian $-\Delta$ destroys constant functions. 

Since $(-\Delta)^{-1/2}$ is invertible the dimension of the null space of $A$ is
the same as the dimension of the null space of $H$, which is $1$-dimensional. In
particular, we have 
$$
H f^{-1/2} = 0, \quad \text{and} \quad A (-\Delta)^{1/2} f^{-1/2} = 0.
$$
Let $\lambda_1(H)$ and $\lambda_1(A)$ denote the smallest positive eigenvalue of
$H$ and $A$, respectively. If $\psi_1(H)$ is a normalized eigenvector associated
with $\lambda_1(H)$, then it follows from the Courant-Fisher Theorem that
$$
\lambda_1(A) \ge c \lambda_1(H),
$$
where $c = (\int_\Omega \|\nabla \psi_1(H)\|^2 dx)^{-1}$. In the following, we
treat $c$ as a fixed constant, which empirically we find is the case.
Under this assumption, the condition number of $A$ on
the space of functions orthogonal to $(-\Delta)^{1/2} f^{-1/2}$ is bounded by
the operator norm of $A$, which satisfies
$$
\|A\|^2 \le 1 + \|V\|_{L^\infty}, 
$$
If $f$ is an arbitrary smooth positive function, then $V$ could still take very
large values, which could make our problem ill-conditioned. In the following
section, we introduce a definition of $V$ that includes smoothing which can be
used to control its $L^\infty$-norm.

\subsection{Smoothing when defining the potential} \label{smoothout}
Recall that the potential $V$ in the definition of $H$ can be defined by
$$
V := f^{-1/2} \Delta f^{1/2}.
$$
The basic idea is to run the heat equation on $f^{1/2}$ and use the resulting
smoothed function to define the potential $V$.  Given a function $f$, we 
define a $1$-parameter family of norms parameterized by $\tau > 0$ as follows.
Let $\{\lambda_k\}_{k=0}^\infty$ and $\{\varphi_k\}_{k=0}^\infty$ denote the
Neumann Laplacian eigenvalues and eigenfunctions, see \S \ref{spectral}.  We
define the Neumann heat kernel $e^{-\tau \Delta}$ for a function $\psi = \sum_k
\alpha_k \varphi_k$ by 
$$
e^{-\Delta t} \psi = \sum_k e^{-\tau \lambda_k} \alpha_k \varphi_k.
$$
Next, we define the smoothed potential  $V_\tau$ by
\begin{equation} \label{smoothV}
 V = f^{-1/2}_\tau \Delta f^{1/2}_\tau, \quad \text{where} \quad f_\tau :=
(e^{-\tau \Delta} f^{1/2})^2,
\end{equation}
and define the corresponding operator $H_\tau$ by
$$
H_\tau := -\Delta + V_\tau.
$$
By \S \ref{wittenf}, the operator $H_\tau$ defines a
$\dot{H}^{-1}(d\mu_\tau)$-norm, where $\mu_\tau$ is the measure with density
$f_\tau$. Observe that if $\tau = 0$ then
$\|u\|_{\dot{H}^{-1}(d\mu_\tau)} = \|u\|_{\dot{H}^{-1}(d\mu)}$, while when
$\tau \rightarrow \infty$ then $\|u\|_{\dot{H}^{-1}(d\mu_\tau)} \rightarrow
\|u\|_{\dot{H}^{-1}(dx)}$. In particular, we have
$$
\|V_\tau\|_{L^\infty} \rightarrow 0, \quad \text{as} \quad \tau \rightarrow
\infty,
$$
so the parameter $\tau$ can be used to control the condition number of $H$, and
hence can be used to control the computational cost as is discussed in the
following section.

\subsection{Computational cost} \label{compcost}
Computing the $\dot{H}^{-1}(d\mu_\tau)$-norm using the operator
$$
H_\tau = -\Delta + V,
$$
involves solving an elliptic equation involving $H_\tau$. By \S
\ref{preconditioning} this equation can be preconditioned by a change of
variables resulting in a linear system 
$$
A_\tau \Psi = U,
$$
where $A_\tau$ is an operator with condition number
$\mathcal{O}(\sqrt{1+\|V_\tau\|_{L^\infty}})$.  Since $A_\tau$ is positive
definite on the space orthogonal to its null space, and since $U$ is contained
in this space, we can use Conjugate Gradient to solve this
linear system to a fixed precision $\varepsilon >0$ with computational cost 
$$
C_\text{solve} = \mathcal{O}\left( C_A  \sqrt{1 +
\|V_\tau\|_{L^\infty}} \right),
$$
where $C_A$ is the cost to apply $A$. The operator $A$ can be applied quickly if we can efficiently change between the standard basis
and the basis of Neumann Laplacian eigenfunctions. In the following remark, we
discuss the case $\Omega = [0,1]^2$, where this transformation can be performed
by a Discrete Cosine Transform (DCT).

\begin{remark}[Spectral decomposition of Laplacian on unit square]
In the case $\Omega = [0,1]^2$, the Neumann Laplacian eigenvalues and
eigenfunctions can be  indexed by $k = (k_1,k_2) \in \mathbb{Z}_{>0}^2$ and are
of the form
$$
\lambda_k = k_1^2 + k_2^2
\quad \text{and} \quad
\varphi_k(x) = c_{k_1} c_{k_2} \cos(\pi k_1 x_1) \cos(\pi k_2 x_2),
$$
where $x = (x_1,x_2)$ and $c_{k_1}$ and $c_{k_2}$ are constants to normalize
$\varphi_k$ to have unit $L^2$ norm: $c_{k_1} = 1/\sqrt{2}$ if $k_1 > 0$
and $c_{k_1} = 1$ if $k_1 = 0$.  Thus, expanding a function on the unit square
in these Neumann eigenfunctions is equivalent to expanding a function in the
double cosine series, which can be efficiently achieved by the Discrete Cosine
Transform (DCT). In particular, if a function on the unit square $[0,1]^2$ is
represented by an $n \times n$ array, then  the computational cost of expanding
in a double cosine series using the DCT is $\mathcal{O}(n^2 \log n)$ operations.
\end{remark}

\section{Numerical examples} \label{numericssec}

In this section we describe a numerical algorithm for using the Witten
Laplacian to compute a local linear approximation of $W_2$ distance via the
$\dot{H}^{-1}(d\mu_\tau)$-norm. We use the analytical tools of the previous
sections, and demonstrate the method on several numerical examples. In
particular, this section is organized as follows: First, in \S \ref{implement} we
describe the implementation of the algorithm and provide a link to code. 
In \S \ref{propwass} we include analytical results about Wasserstein distance
for Gaussian distributions and translations
that we will use to interpret the numerical results. Third, in \S \ref{basicex},
we provide an initial numerical example for Gaussian distributions that
illustrates the result of Proposition \ref{proplin}.
Next, in \S \ref{translateex} we provide illustrations of how the 
linearization approximates
Wasserstein distance for translations. Fifth, in \S \ref{varianceex} we
include visualizations of how the linearization approximates Wasserstein distance for 
changes in variance. Finally, in \S \ref{secembed} we present an example of computing 
an embedding of the $\dot{H}^{-1}(d\mu)$-norm into $L^2$.

\subsection{Implementation} \label{implement}

\begin{algorithm}[Linearized $W_2$ via Witten Laplacian] \label{alg1}
We first compute the Witten potential, $V$, and 
then solve the resulting partial differential
equation by converting it to a symmetric linear 
system which is solved using conjugate gradient. 
\begin{enumerate}[\quad 1)]
    \item Compute the potential $V_\tau$ using the smoothing procedure of
section \ref{smoothout}.
    \item Solve the linear system 
    $$ A \Psi = U $$ 
    using conjugate gradient where $A$ is defined by (\ref{20}).
    \begin{enumerate}[\quad a)]
    \item 
        The discretized operator $A$ of (\ref{20}) can be applied to a function, 
        $f$,
        tabulated on an equispaced grid by first approximating $f$
        as a $2$-dimensional cosine expansion of the form 
        $$ f(x_1, x_2) \approx \sum_{k_1, k_2 = 0}^{n-1} \alpha_{k_1, k_2} \cos(\pi k_1 x_1)\cos(\pi k_2 x_2)$$
        where $n$ is the number of function tabulations in each spatial dimension. 
        The coefficients $\alpha_{k_1, k_2}$
        are computed with a Discrete Cosine Transform (DCT), which 
        requires $O(n^2 \log{n})$ operations. 
    \item The operator $\Delta^{-1/2}$ of $A$ is 
        applied to a cosine expansion via pointwise multiplication of 
        the coefficients. For example, $\Delta^{-1/2} \cos(m x_1) = -\frac{1}{m} \cos(mx_1)$. 
    \item Pointwise multiplication by $V_\tau$  in spatial domain 
        is then performed with an inverse DCT, followed 
        by pointwise multiplication in the spatial domain. 
    \item Conjugate gradient is iterated until convergence up to some desired 
        error tolerance.
    \end{enumerate}
\end{enumerate}
\end{algorithm}

We implemented the preceding algorithm in Python, and have provided
publicly available codes with the implementation accessible at
\url{https://github.com/nmarshallf/witten_lw2}.

\subsection{Analytic formulas for \texorpdfstring{$W_2$}{W2} for Gaussian
distributions and translations} \label{propwass}

Let $\mu$ and $\nu$ be measures on $\mathbb{R}^N$ with densities $f$
and $g$ with respect to Lebesgue measure: $d\mu = f dx$ and $d\nu =
g dx$. Assume that $f$ is a Gaussian function with mean $m_f$ and
diagonal covariance $\Sigma_f = \diag(\sigma_f^2)$, where $\sigma_f^2
=(\sigma_{f,1}^2,\ldots,\sigma_{f,N}^2)$
$$
f(x) = \frac{1}{(2\pi)^{d/2} (\det \Sigma_f)^{1/2}} \exp \left( -\frac{1}{2}
(x-m_1)^\top \Sigma_f^{-1} (x-m_f)    \right).
$$
Similarly, assume that $g$ is a Gaussian function with mean $m_g$ and covariance
$\Sigma_g = \diag(\sigma_g^2)$. Then,
\begin{equation} \label{w2gauss}
W_2(\mu,\nu)^2 = |m_f - m_g|_2^2 + | \sigma_f - \sigma_g |_2^2.
\end{equation}
That is, the square of the quadradic Wasserstein distance between Gaussian
distributions with diagonal covariance matrices is equal to the square of the
distance between their means plus the square of the distance between their
standard deviations, see [Remark 2.31 of \cite{PeyreCuturi2019}] for a more
general result.

The dependence of quadratic Wasserstein distance on the distance between
means for Gaussian distributions is a special case of a general translation
property.  Let $\mu$ and $\nu$ be two measures on $\mathbb{R}^N$ that
have the same mean
$$
\int_{\mathbb{R}^N} x d\nu = \int_{\mathbb{R}^N} x d\mu.
$$
Let $T^v: \mathbb{R}^N \rightarrow \mathbb{R}^N$ denote the translation
operator $T^v: x \mapsto x + v$. Suppose that $\nu_v$ denote a translation of
$\nu$ by $v$; more formally, $\nu_v := T^v_\# \nu$ where $\#$ denotes the push
forward.  Then quadratic Wasserstein distance satisfies the following relation:
\begin{equation}  \label{w2trans}
W_2(\mu,\nu_v)^2 = W_2(\mu,\nu)^2 + |v|_2^2,
\end{equation}
That is, if two measures have the same mean and one measure is translated
distance $|v|$, then the square of the quadratic Wasserstein distance between
the measures  increases by $|v|^2$, see [Remark 2.19 of \cite{PeyreCuturi2019}]
for a slightly more general statement of this
translation result.

\subsection{Numerical example: linearization of \texorpdfstring{$W_2$}{W2} for
Gaussian distributions} \label{basicex}

In this section, we demonstrate that our code satisfies the result of
Proposition \ref{proplin} using Gaussian distributions and \eqref{w2gauss}.
Let $\mu$ and $\nu$ be measures on $\mathbb{R}^N$ with densities $f$
and $g$ with respect to Lebesgue measure: $d\mu = f dx$ and $d\nu =
g dx$. We define $f : [0,1]^2 \rightarrow \mathbb{R}$ by
\begin{equation} \label{gaussss}
f(x) = \frac{1}{2 \pi (\det \Sigma_f)^{1/2}} \exp\left( -\frac{1}{2}(x -
\mu_f)^\top \Sigma_f^{-1} (x - \mu_f) \right),
\end{equation}
with 
$$
\mu_f = (1/2,\, \, 1/2)^\top
\qquad \text{and} \qquad
\Sigma_f = 
\begin{pmatrix}
    1 / 16 & 0 \\
    0 & 1 / 14
\end{pmatrix},
$$
and $g : [0,1]^2 \rightarrow \mathbb{R}$  by
$$
g(x) = \frac{1}{2 \pi (\det \Sigma_g)^{1/2}} \exp\left( -\frac{1}{2}(x -
\mu_g)^\top \Sigma_g^{-1} (x - \mu_g) \right), 
$$
with
$$
\mu_g = \mu_f + (0.001, \,\, 0.002)^\top
\qquad \text{and} \qquad
\Sigma_g = \Sigma_f + 
\begin{pmatrix}
    0.001 & 0 \\
    0 & 0.003
\end{pmatrix}.
$$
The covariances $\Sigma_f$ and $\Sigma_g$ are chosen such that, for numerical
purposes up to precision $10^{-16}$, the functions $f$ and $g$, are essentially
supported on $[0,1]^2$ and thus both are probability densities that integrate to
$1$.  For this numerical example, we use the above Python implementation of
Algorithm \ref{alg1} for the functions $f$ and $g$ tabulated on a $513 \times
513$ equispaced grid on $[0, 1]^2$, we define $u = (f - g)/f$,
see \S \ref{smoothout}.  Recall that  Proposition \ref{proplin} says that if
$\|u\|_{\dot{H}^{-1}(d\mu)} = \varepsilon$, then 
$$
|W_2(\mu,\nu) -
\|u\|_{\dot{H}^{-1}(d\mu)} | = \mathcal{O}(\varepsilon^2).  
$$
The implementation gives
\begin{equation} \label{verifyprop1}
\|u\|_{\dot{H}^{-1}(d\mu)} \approx 1.2397 \times 10^{-3};
\end{equation}
using \eqref{w2gauss} we find that
\begin{equation} \label{verifyprop2}
|W_2(\mu,\nu) -  \|u\|_{\dot{H}^{-1}(d\mu)} | \approx 6.8598 \times 10^{-6}.
\end{equation}
Thus, \eqref{verifyprop1} and \eqref{verifyprop2} provide a numerical
demonstration of Proposition \eqref{proplin}.

\subsection{Numerical example: visualizing the linearization for translations}
\label{translateex} 

In this section, we visualize how different metrics compare to $W_2$ by
considering a subset of the sphere $\{ \nu : W_2(\mu,\nu)= \varepsilon\}$; in
particular, we consider the subset of this sphere that consists of translated
versions of $\mu$. Fix $\varepsilon > 0$, let $S$ denote the
unit circle $S := \{ v \in \mathbb{R}^2 : \|v\|_2 = 1 \}$, and observe that
\begin{equation} \label{w2slice}
\left\{ W_2(\mu,\mu_{\varepsilon v}) v \in \mathbb{R}^2 : v \in
S \right\} = \varepsilon S,
\end{equation}
where $\mu_{\varepsilon v}$ is the translation of $\mu$ by $\varepsilon v$; the
fact that this set is equal to $\varepsilon S$ follows from \eqref{w2trans}.
 In the following, we define analogs of the set defined in the left hand side of
\eqref{w2slice}, where the $W_2$ metric is replaced by our linearization, the
unweighted Sobolev norm, and the Euclidean norm, respectively. By plotting these
sets, we can understand how these metrics distort slices of small spheres with
respect to the $W_2$ metric. Let $\mu$ be a measure with density $f$ with
respect to the Lebesgue measure: $d \mu = f dx$. Suppose that $\mu_{\varepsilon
v}$ is the translation of $\mu$ by $\varepsilon v$, which is the measure with
density $f_{\varepsilon v}(x) := f(x + \varepsilon v)$.
First, we use the weighted negative homogeneous Sobolev norm based on the
regularized Witten Laplacian formulation described  in \S \ref{compsec} to
define
$$
T_\text{witten} := \left\{ \|(f-f_{\varepsilon
v})/f_\tau\|_{\dot{H}^{-1}(d\mu_\tau)} v \in \mathbb{R}^2 : v \in S \right\},
$$
second, we use the unweighted Sobolev norm to define
$$
T_\text{sobolev} := \left\{ \|f - f_{\varepsilon v}\|_{\dot{H}^{-1}(dx)} v \in \mathbb{R}^2
: v \in S \right\},
$$
and third, we use the Euclidean norm to define
$$
T_\text{euclid} := \left\{ \|f - f_{\varepsilon v}\|_{L^2(dx)} v \in \mathbb{R}^2 : v \in S
\right\}.  
$$
For this numerical example, we use the function $f$ plotted in Figure
\ref{fig01}, see \S \ref{intro}. This function $f : [0,1]^2
\rightarrow \mathbb{R}$ is defined by
$$
f(x) = \frac{1}{c} \exp(9 x_1) (\cos(16 \pi x)+1) \zeta(x),
$$
where $\zeta$ is a bump function supported in $[.1,.9]^2$ that is equal to $1$
on $[.2,.8]^2$, and $c$ is a constant that normalizes $f$ so that it is a
probability density; given $f$ we define the potential $V_\tau$, see Figure
\ref{fig01}.  We plot the sets $T_\text{witten}, T_\text{sobolev},$ and
$T_\text{euclid}$ in Figure \ref{fig02}. Note that the set $\varepsilon S$ is
included for reference and is plotted using a dotted line in  the plots of
Figure \ref{fig02}.

\begin{figure}[ht!]
\centering
\begin{tabular}{ccc}
\includegraphics[width =.3\textwidth]{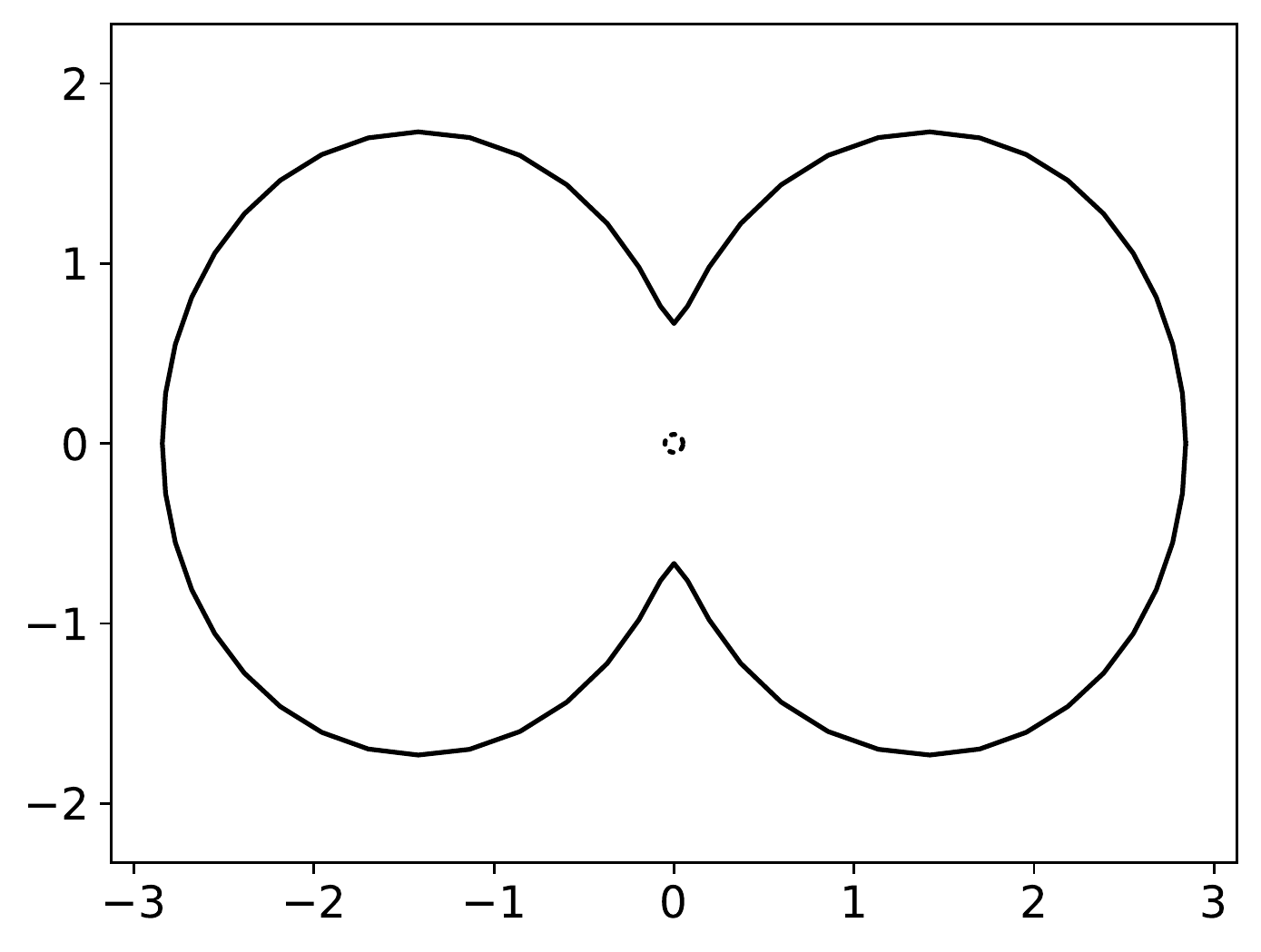} &
\includegraphics[width =.3\textwidth]{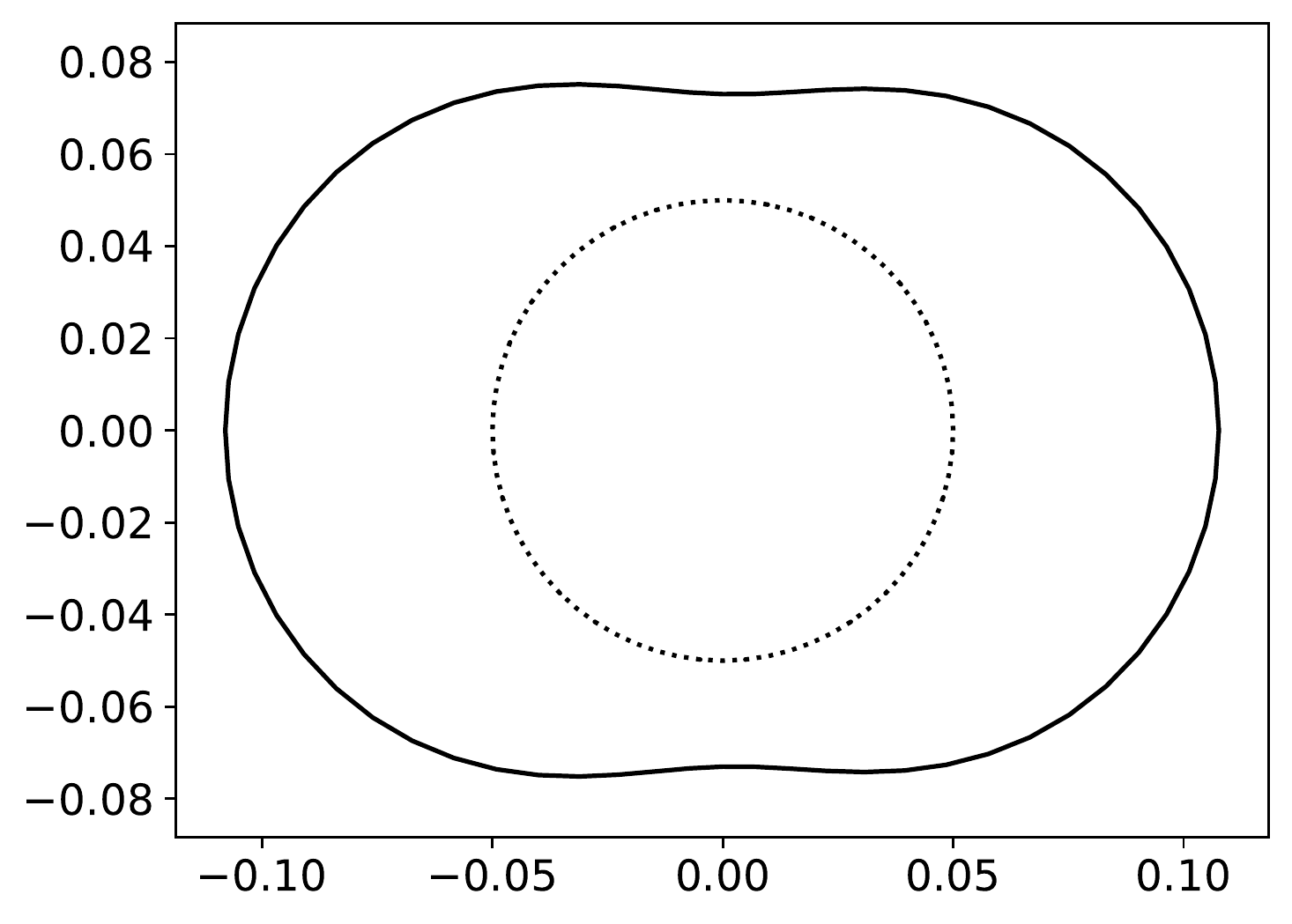} &
\includegraphics[width =.3\textwidth]{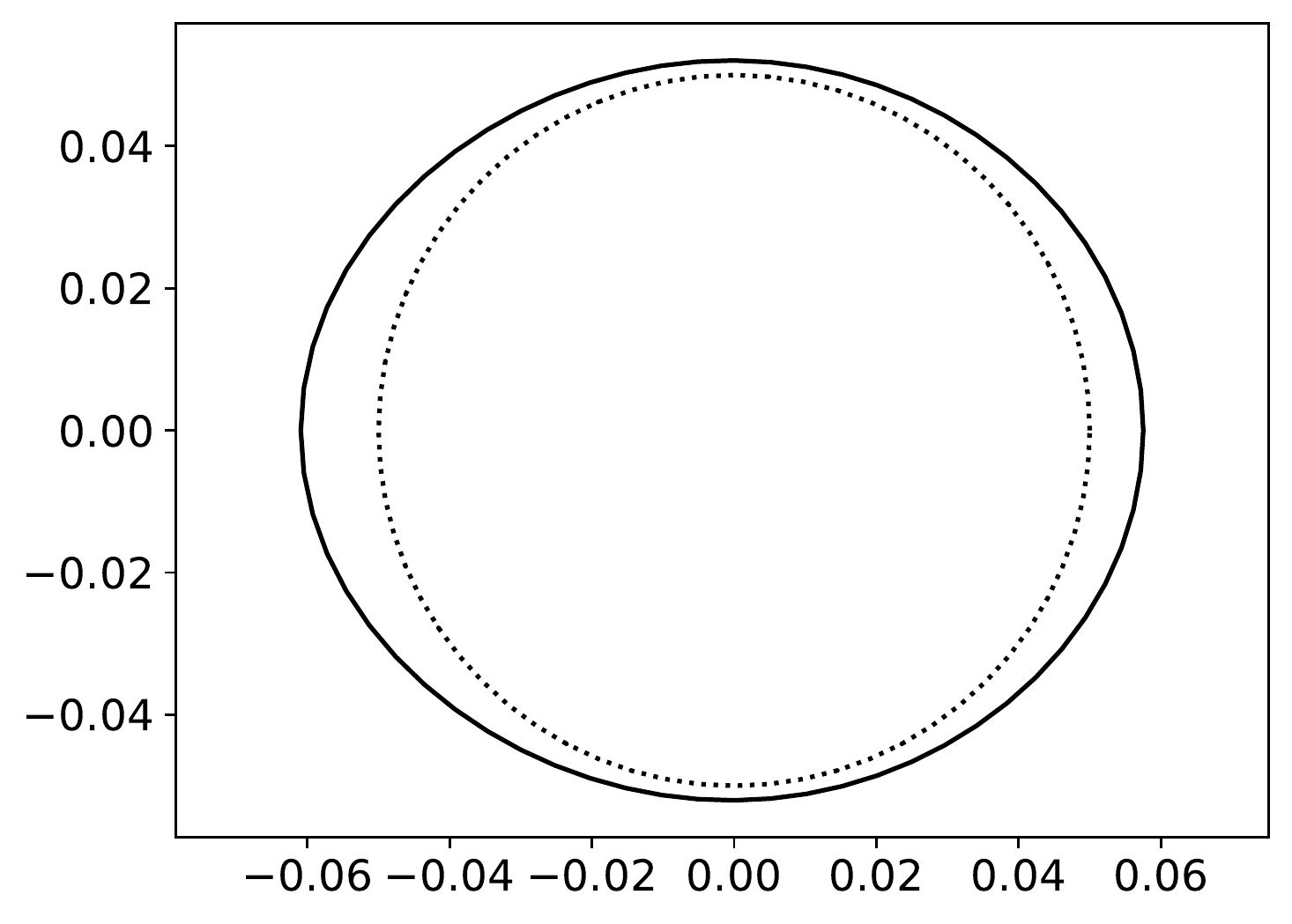}
\end{tabular}
\caption{$T_\text{euclid}$ (left), $T_\text{sobolev}$ (middle), and
$T_\text{witten}$ (right).}
\label{fig02}
\end{figure}

First, consider the plot of $T_\text{euclid}$ in Figure \ref{fig02}. Since the
probability measures $f$ and $f_{\varepsilon v}$ are probability densities, they
can be thought of as being normalized to have $L^1$-norm equal to $1$, which is
the reason that the scale of $T_\text{euclid}$ is much larger than $\varepsilon
S$ which appears as a dot. The shape of $T_\text{euclid}$ can be interpreted as
follows: if the image $f$ is translated up, then the vertical stripes will
mostly overlap, see Figure \ref{fig01}, resulting in a small change in the
Euclidean distance.  In contrast, if the image is shifted left, then the strips
will become misaligned resulting in a large change in the Euclidean distance;
this explains the barbell shape of the set $T_\text{euclid}$. Next, consider the
plot of $T_\text{sobolev}$ in Figure \ref{fig02} corresponding to the unweighted
Sobolev norm, which partially corrects the scaling. Finally, the plot of
$T_\text{wittin}$ which is the linear approximation of Wasserstein distance
computed using the method described in this paper nearly recovers the circle
with only a small deformation.

\subsection{Numerical example: visualizing effect of changing variance}
\label{varianceex}

In this section, we again visualize how different metrics compare to $W_2$ by
considering a subset of the sphere $\{ \nu : W_2(\mu,\nu)=
\varepsilon\}$. By assuming that the density $f$ of $\mu$ is a Gaussian function
with a diagonal covariance matrix we can consider the subset of $\{ \nu :
W_2(\mu,\nu)= \varepsilon\}$ consisting of Gaussian distributions with the same
mean, but whose diagonal covariance matrix is different. 
Let $f$ be a Gaussian function  centered at $(1/2,1/2)^\top$ with diagonal covariance
matrix $\Sigma_f = \diag(\sigma_f^2)$  where $\sigma_f^2 =
(\sigma_{f,1}^2,\sigma_{f,2}^2)$
$$
f(x) := \frac{1}{2 \pi (\det \Sigma_f)^{1/2}} \exp\left( -\frac{1}{2} (x-(1/2,1/2)^\top)^\top
\Sigma_f^{-1} (x - (1/2,1/2)^\top) \right).
$$
We plot $f$ and its regularized potential $V_\tau$ in Figure \ref{fig03}.

\begin{figure}[ht!]
\centering
\begin{tabular}{cc}
\includegraphics[width =.45\textwidth]{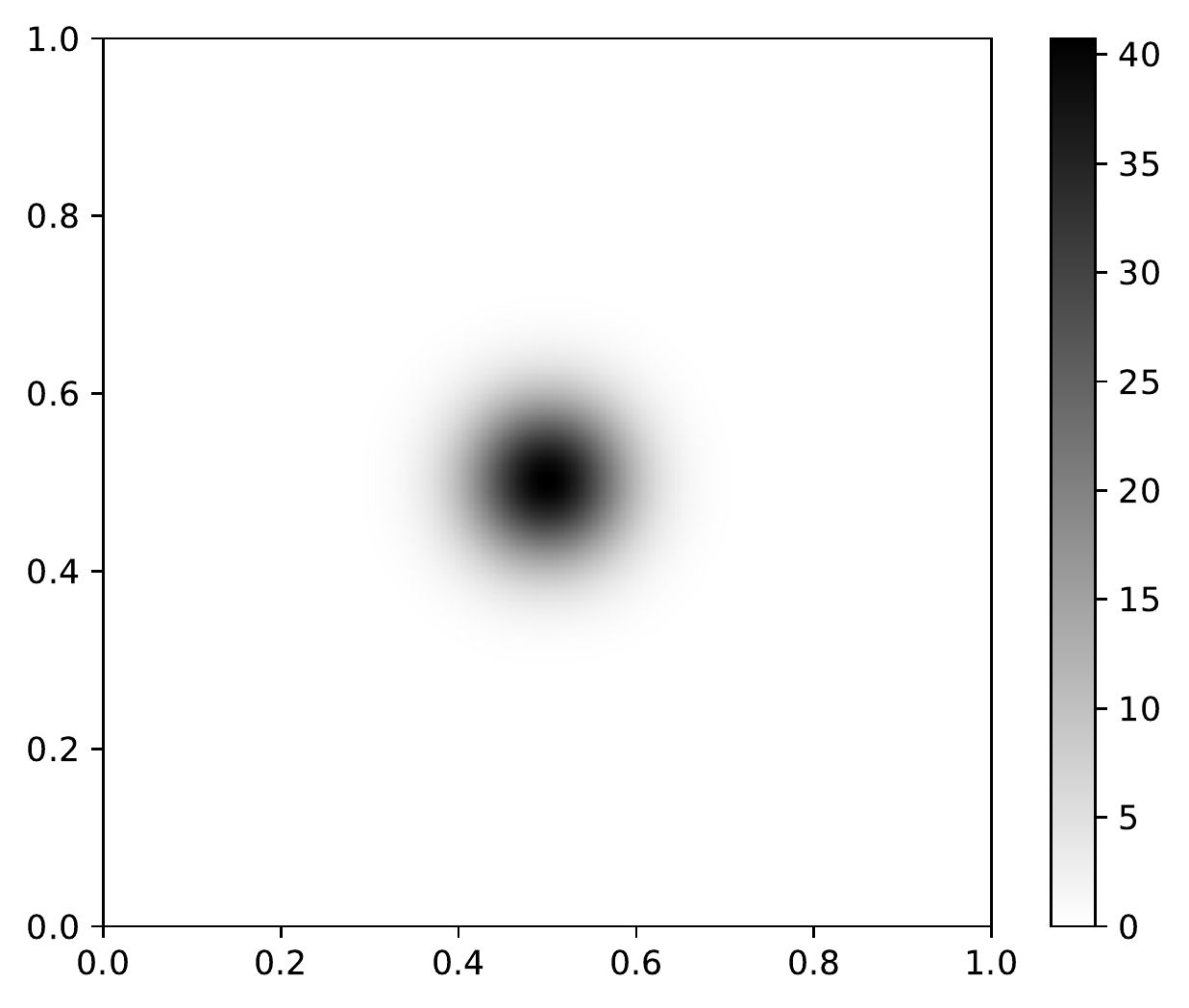}  &
\includegraphics[width =.45\textwidth]{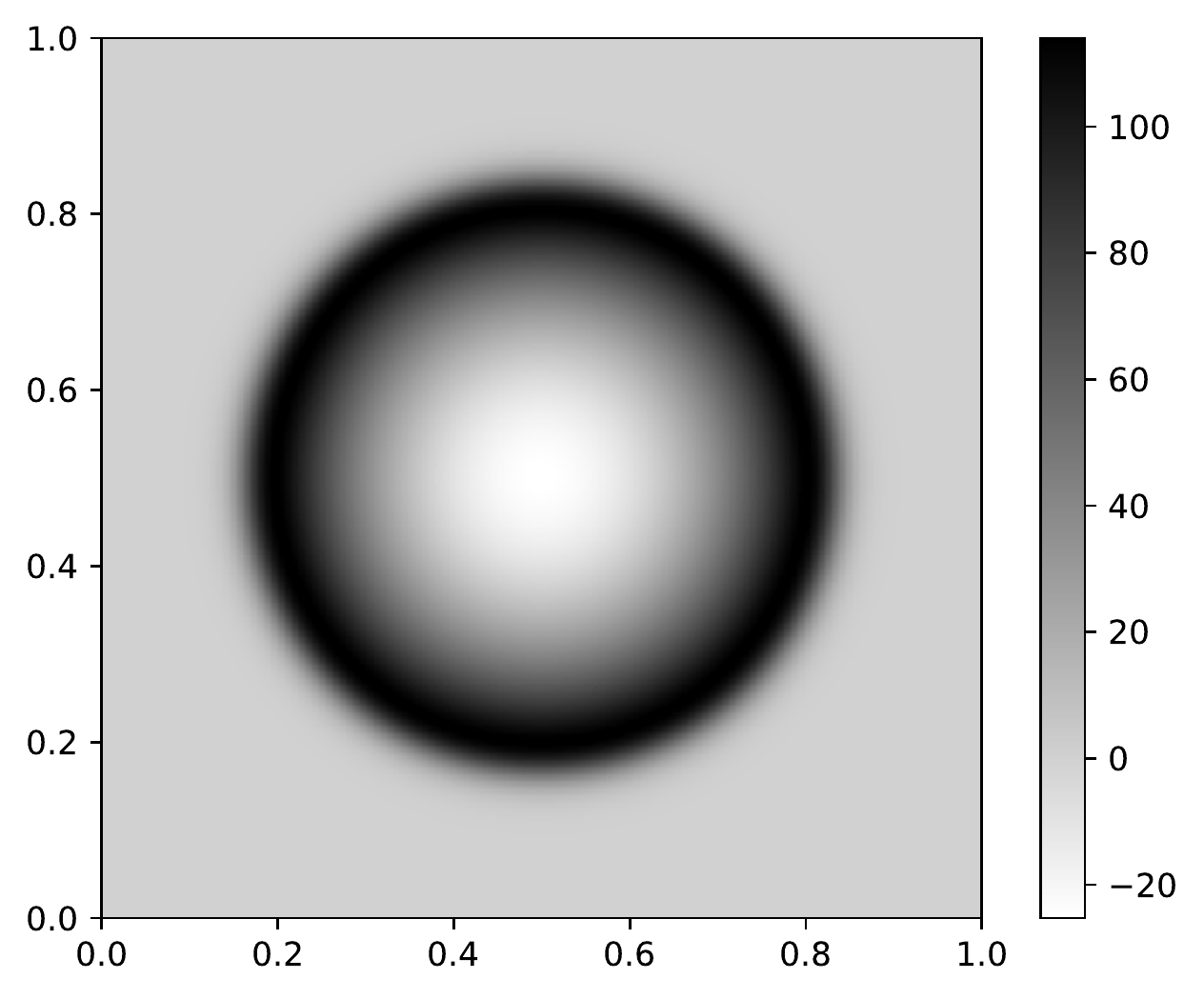}  
\end{tabular}
\caption{The function $f$ (left) and its regularized potential $V_{\tau}$ (right).}
\label{fig03}
\end{figure}

If the density $g$ of $\nu$ is the Gaussian function centered at $(1/2,1/2)^\top$ with
diagonal covariance matrix $\Sigma_g = \diag(\sigma_g^2)$, then recall that by
\eqref{w2gauss} we have
\begin{equation}
\label{w2sigmaa}
W_2(\mu,\nu) = | \sigma_f - \sigma_g |.
\end{equation}
Fix $\varepsilon > 0$, let $S$ denote the unit circle $S := \{ v \in
\mathbb{R}^2 : \|v\|_2 = 1 \}$, and observe that
\begin{equation} \label{w2slicev}
\left\{ W_2(\mu,\mu_{\varepsilon v}) v \in \mathbb{R}^2 : v \in
S \right\} = \varepsilon S,
\end{equation}
where here $\mu_{\varepsilon v}$ is the measure with density $f_{\varepsilon
v}$, where $f_{\varepsilon v}$ is a Gaussian function centered at the $(1/2,1/2)^\top$
with diagonal covariance 
$$
\Sigma_{f_{\varepsilon v}} = \diag( (\sigma_f + \varepsilon v))^2.
$$
That is, $f_{\varepsilon v}$ changes the standard deviations
$\sigma_f$ of the Gaussian $f$ by $\varepsilon v$. The fact that
 \eqref{w2slicev} holds follows from \eqref{w2sigmaa}. As in the previous
section, we study analogs of the set defined in the left hand side of
\eqref{w2slicev}, where the $W_2$ metric is replaced by our linearization, the
unweighted Sobolev norm, and the Euclidean norm, respectively. 
In particular, we define
$$
V_\text{witten} := \left\{ \|(f-f_{\varepsilon
v})/f_\tau\|_{\dot{H}^{-1}(d\mu_\tau)} v \in \mathbb{R}^2 : v \in S \right\},
$$
$$
V_\text{sobolev} := \left\{ \|f - f_{\varepsilon v}\|_{\dot{H}^{-1}(dx)} v \in \mathbb{R}^2
: v \in S \right\},
$$
and
$$
V_\text{euclid} := \left\{ \|f - f_{\varepsilon v}\|_{L^2(dx)} v \in \mathbb{R}^2 : v \in S
\right\}.  
$$
We plot the sets $V_\text{witten}, V_\text{sobolev},$ and $V_\text{euclid}$ in
Figure \ref{fig02}. Note that the set $\varepsilon S$ is included for reference
and is plotted using a dotted line in  the plots of Figure \ref{fig02}.

\begin{figure}[ht!]
\centering
\begin{tabular}{ccc}
\includegraphics[width =.3\textwidth]{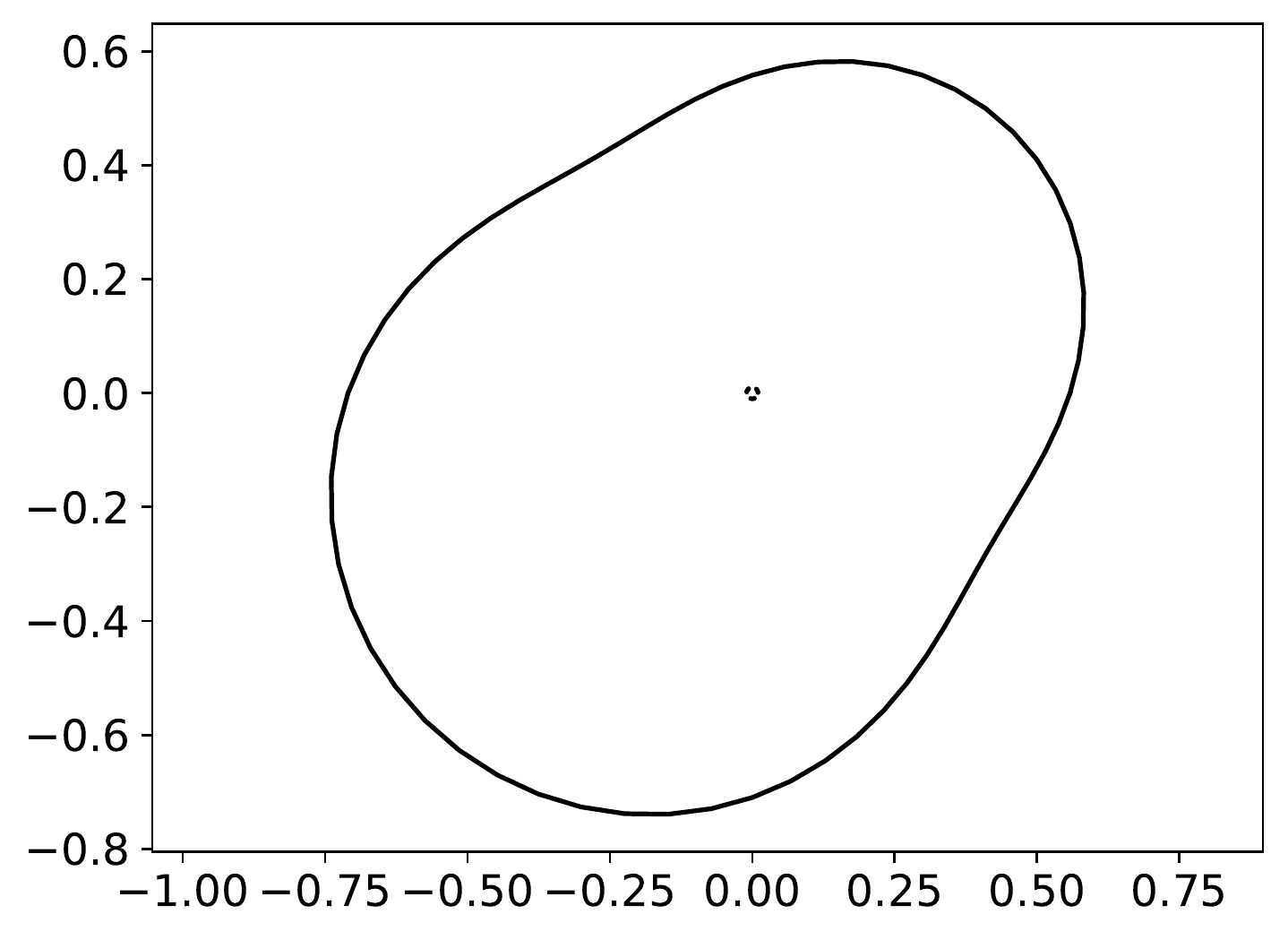} &
\includegraphics[width =.3\textwidth]{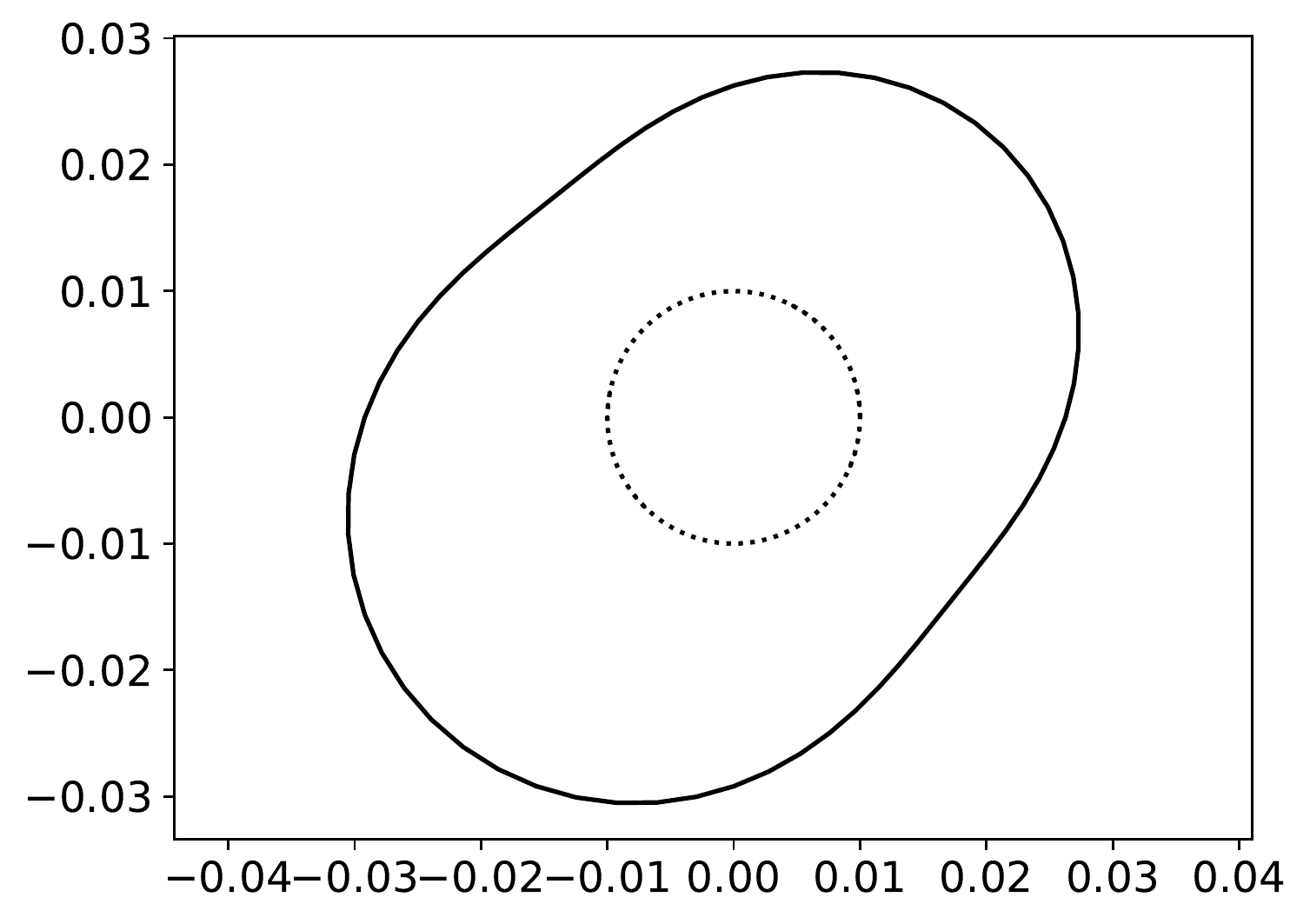} &
\includegraphics[width =.3\textwidth]{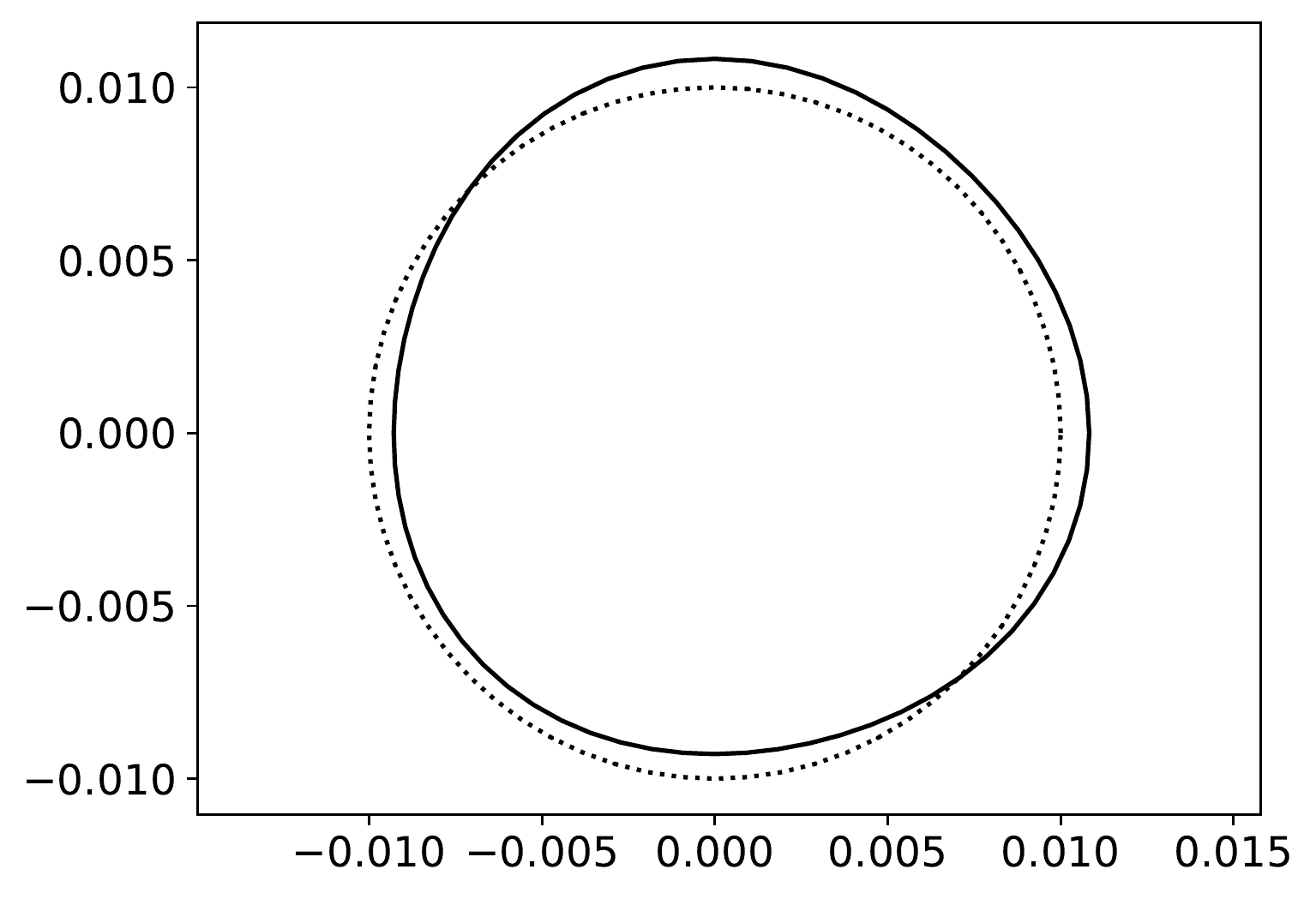}
\end{tabular}
\caption{$V_\text{euclid}$ (left), $V_\text{sobolev}$ (middle), and
$V_\text{witten}$ (right).}
\label{fig04}
\end{figure}

First, observe that the plots of $V_\text{euclid}$ and $V_\text{sobolev}$ in
Figure \ref{fig04} appear stretched in the $(1,1)$  and $(-1,-1)$ directions:
when the vector $\varepsilon v$ changing the standard deviations is in the
positive quadrant this corresponds to increasing the standard deviation of both
variables. Similarly, the negative quadrant (where both components of $v$ are
negative) corresponds to decreasing the standard deviation of both variables.
Both of these deformations result in a similarly large change. In contrast, the
other quadrants (where the components of $v$ have different signs) correspond to
increasing one standard deviation in one direction while decreasing the standard
deviation in the other direction; this explains the asymmetry of
$V_\text{euclid}$ and $V_\text{sobolev}$.  In contrast, the plot of
$V_\text{witten}$ in Figure \ref{fig04} roughly preserves the circle with only a
small deformation.

\subsection{Numerical example: managing noise and computational cost}
\label{costex}

In this section, we remark how the method can be used in more practical
situations involving images. In particular, we consider a $129 \times 129$ image
containing a biomolecule. In order to manage both the computational cost and noise,
we define the potential $V_\tau$ with $\tau = 0.01$, see Figure \ref{fig05}.
\begin{figure}[ht!]
\centering
\begin{tabular}{cc}
\includegraphics[width =.45\textwidth]{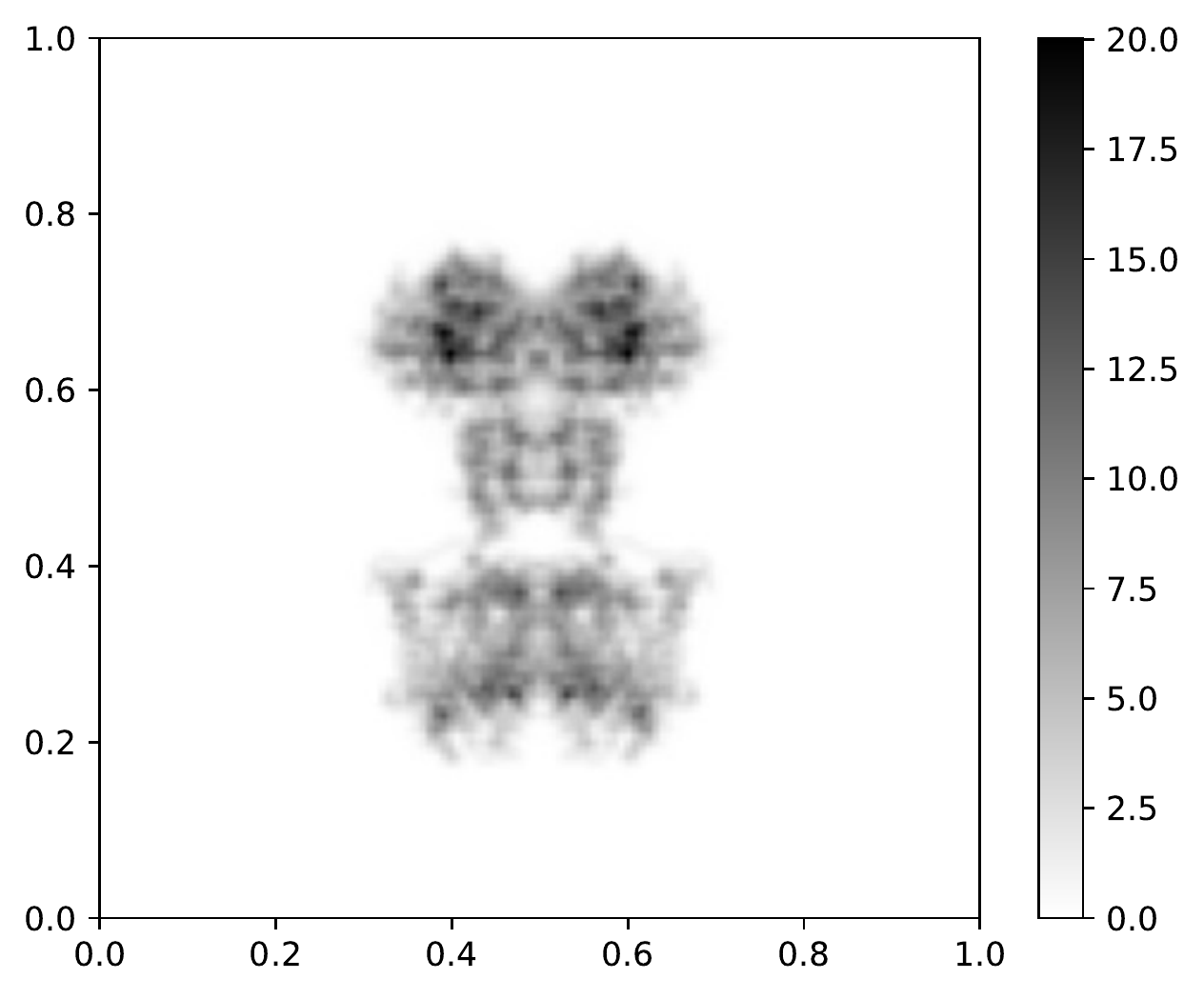}  &
\includegraphics[width =.45\textwidth]{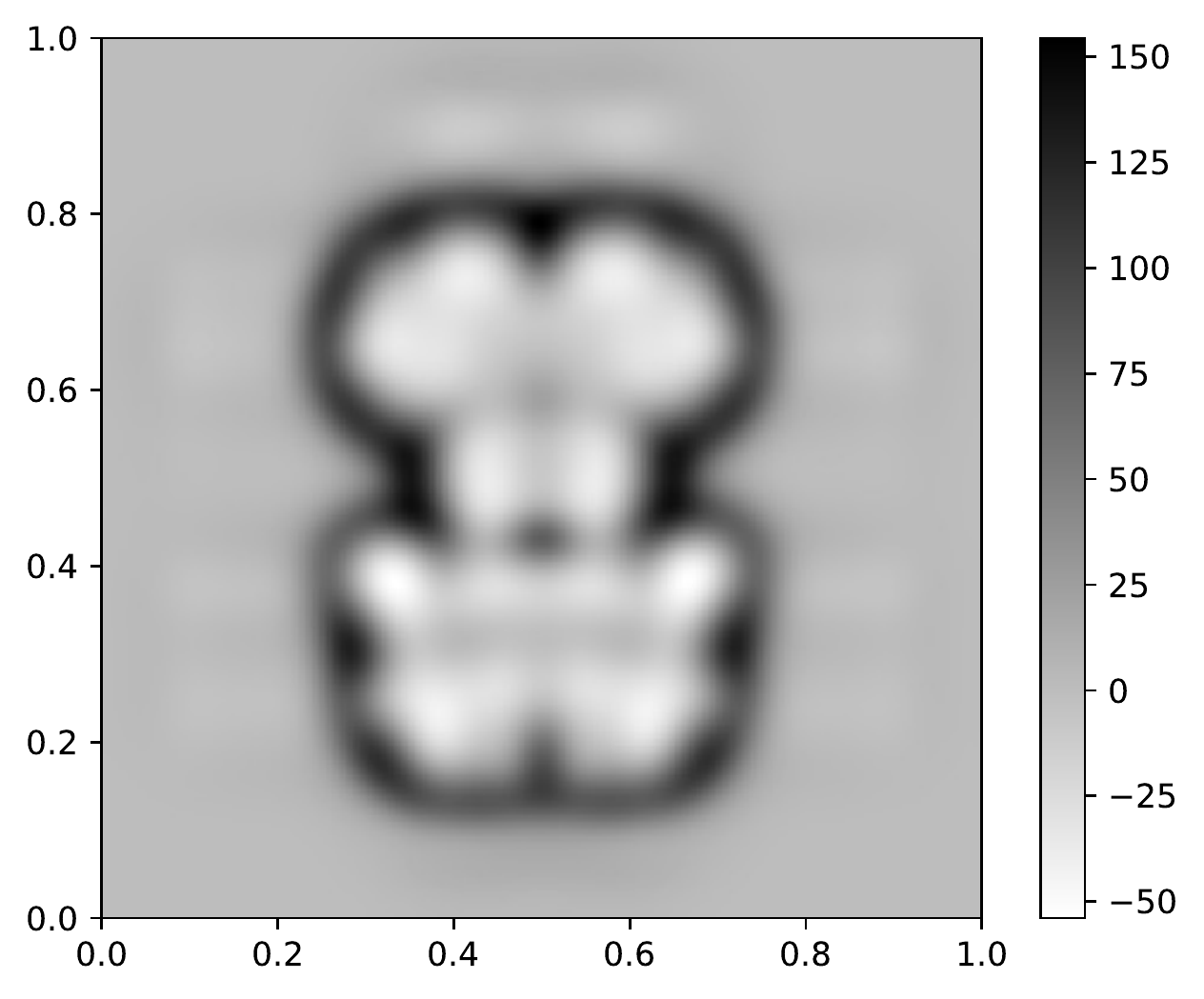}  
\end{tabular}
\caption{Function $f$ (left) and its regularized potential $V$
(right)}
\label{fig05}
\end{figure}
Observe that the maximum value of the potential in Figure \ref{fig05} is about
150. Therefore, we expect the computational cost to be proportional to the
square root of $150$. Using Algorithm \ref{alg1} to compute distances based on
this image has an average time of about $0.035 \text{ (seconds) }$ on a laptop,
where the average is taken over $128$ computations.

\subsection{Numerical example: local embedding} \label{secembed}
In this section, we discuss an immediate extension of the described 
method to defining an embedding of the negative weighted homogeneous Sobolev
norm. In particular, we can define a map
$$
g \mapsto \Phi_f(g) 
$$
such that
\begin{equation} \label{embedH}
\|\Phi_f(g) - \Phi_f(h)\|_{L^2} = \|g - h\|_{\dot{H}^{-1}(d\mu)}.
\end{equation}
Indeed, it follows from \S \ref{wittenf} that if
$$
\Phi_f(g) = H^{-1/2} ((f-g)/\sqrt{f}),
$$
where $H = -\Delta + V$ and the potential $V$ depends on $f$, 
then \eqref{embedH} holds. The operator $H^{-1/2}$ with Neumann boundary
conditions is well defined since $H$ is positive definite on a subspace that
contains $(f - g)/\sqrt{f}$. Computationally, $H^{-1/2}$ can be computed using a
version of $H$ with a regularized potential via
an iterative method based on approximating $\sqrt{x}$ by Chebyshev polynomials. To
illustrate this embedding we define Gaussian functions $f,g,h$ with means
$$
\mu_f = (1/2,1/2), \quad \mu_g = \mu_f + (0.001, 0.002), \quad 
\mu_h = \mu_f + 0.003, -0.002)
$$
and standard deviations
$$
\sigma_f = (1/16, 1/14 ), \quad
\sigma_g = \sigma_f + (0.001, 0.003), \quad
\sigma_h = \sigma_f + (-0.001, 0.002),
$$
respectively, where the Gaussian function is defined by \eqref{gaussss}. 
We compute $\Phi_f(g)$ and $\Phi_f(h)$ and plot the result in Figure \ref{fig06}.
\begin{figure}[ht!]
\centering
\begin{tabular}{cc}
\includegraphics[width =.45\textwidth]{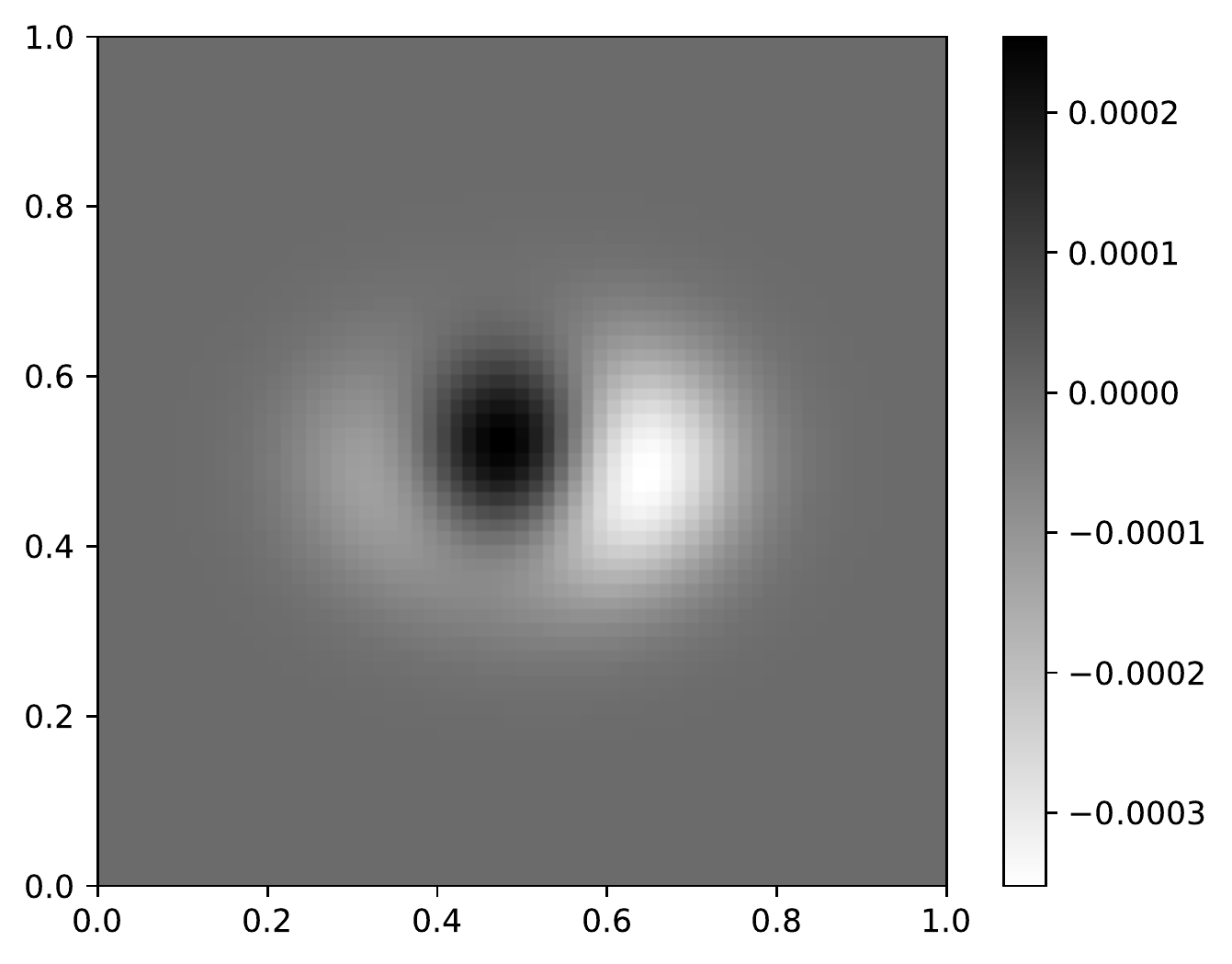}  &
\includegraphics[width =.45\textwidth]{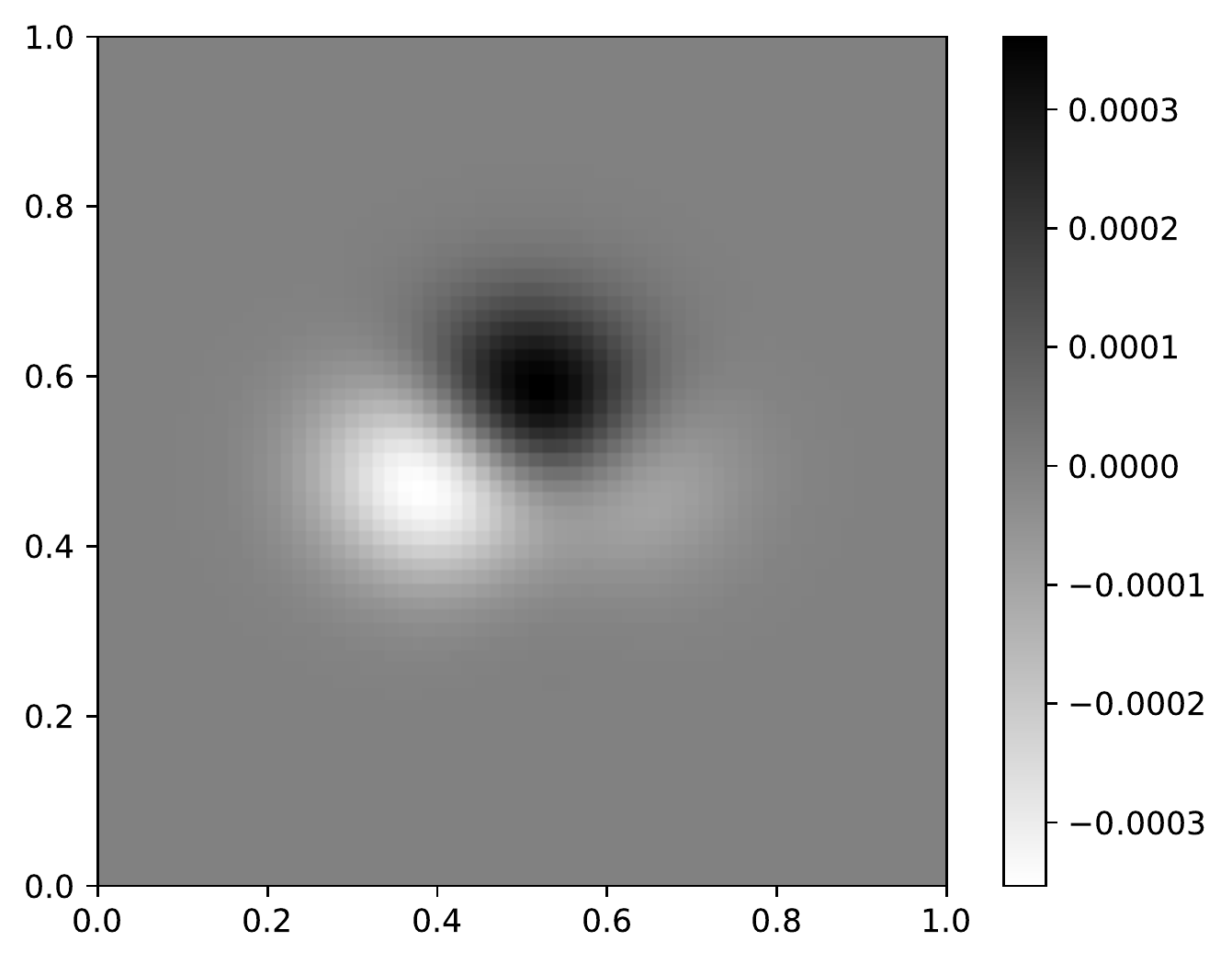}  
\end{tabular}
\caption{$\Phi_f(g)$ (left) and $\Phi_f(h)$
(right).}
\label{fig06}
\end{figure}
We find that
$$
\|\phi_f(g) - \phi_f(h)\|_{L^2} = 4.949 \times 10^{-3},
$$
and using the analytic formula for the $W_2$ metric between these Gaussian
distributions gives
$$
|\|\phi_f(g) - \phi_f(h)\|_{L^2} - W_2(\nu,\eta)| \approx 5.080 \times 10^{-5}
$$
where $\mu$ and $\eta$ are the measures associated with $g$ and $h$,
respectively, which verifies the effectiveness of this embedding as a local 
approximation of the $W_2$ metric.

\section{Discussion} \label{discuss}

In this paper we have studied a classic linearization of Wasserstein distance.
In particular, we focused on the connection between $W_2$ and the Witten
Laplacian which is a Schr\"odinger operator of the form
$$
H = -\Delta + V.
$$
From a computational point of view, the principle advantage of this formulation
is that the computational cost of solving $H = - \Delta + V$ can be roughly
bounded by the square root of the maximum value of the potential (since this
influences the condition number of solving $H \psi = u$ after an appropriate
transformation). The potential $V$ can be smoothed until an acceptable
computational cost is achieved. For example, if the maximum value of $V$ is
$100$, then the number of iterations for Conjugate Gradient will be $\sim 10$, where
each iteration has the same cost of computing the unweighted Sobolev norm. The
numerical experiments indicate how this Witten Laplacian distance will,
roughly speaking, preserve the Wasserstein distance for small balls around
measures.  This perspective opens the possibility of many interesting
applications. For example, the operator $H$ can be used to smooth images via the
diffusion 
$$
f \mapsto \exp (-\tau H) f,
$$
where $\exp$ denotes the operator exponential, which is interesting since the
infinitesimal generator $H$ of the diffusion has a connection to Wasserstein
distance. More generally, the fractional diffusion
$$
f \mapsto \exp (-\tau H^\alpha) f,
$$
for $\alpha > 0$
can be considered where $H^\alpha$ is the operator to power $\alpha$, which is
well defined since $H$ is positive semi definite. There are other interesting
applications about defining embeddings into Euclidean space, and potential
applications to graphs. Another potential application is related to maximum
mean discrepancy (MMD), which has been consider by other authors, see the
discussion in \S \ref{related}, but our perspective may offer some new ideas.

\end{document}